\newtheorem{thm}{Theorem}[section]
\newtheorem{rem}[thm]{Remark}
\newtheorem{defi}[thm]{Definition}
\newtheorem{ex}[thm]{Example}
\newtheorem{lem}[thm]{Lemma}
\newtheorem{prop}[thm]{Proposition}
\newtheorem{con}[thm]{Conjecture}
\def\NZQ{\Bbb}
\def\RR{{\NZQ R}}
\def\CC{{\NZQ C}}
\def\PP{{\NZQ P}}
\def\SS{{\NZQ S}}
\def\ml{\mathcal{C}}
\def\ml1{\mathcal{C}^1}
\def\mlb1{\mathcal{C}_{b}^{1}}
\def\frk{\frak}
\def\Phi{{\frk n}}
\def\rank{{\rm rank}}
\def\A{{\mathcal A}}
\def\B{{\mathcal B}}
\def\L{{\mathcal L}}
\newcommand{\R}[0]{\mathbb{R}}
\newcommand{\C}[0]{\mathbb{C}}
\newcommand{\Cal}[1]{\mathcal{#1}}
\newcommand{\codim}[0]{\mbox{codim }}
  \definecolor{colore}{cmyk}{0,1,0.6,0}
  \definecolor{coloregen}{cmyk}{0.7,0,1,0}
  \definecolor{coloresimo}{cmyk}{1,0.6,0,0}
  \definecolor{colore}{cmyk}{0,0,0,1}
  \definecolor{coloregen}{cmyk}{0,0,0,1}
  \definecolor{coloresimo}{cmyk}{0,0,0,1}
\title{The generalized Sylvester's and orchard problems via discriminantal arrangement}
\author{P. Das}
\author{E. Palezzato}
 \author{S. Settepanella*}
\address{%
Department of Mathematics,
Hokkaido University, Japan.}
\email{pragnyadas555@gmail.com}
\email{palezzato@math.sci.hokudai.ac.jp}
\email{s.settepanella@math.sci.hokudai.ac.jp}
\thanks{}
\subjclass{ 52C35 05B35 14M15}
\keywords{Discriminantal arrangements, Intersection lattice, orchard problem, Line Arrangements}
\begin{document}

\maketitle


\begin{abstract}
In 1989 Manin and Schechtman defined the discriminantal arrangement $\B(n, k,\A)$ associated to a generic arrangement $\A$ of $n$ hyperplanes in a $k$-dimensional space. An equivalent notion was already introduced by Crapo in 1985 with the name of geometry of circuits. While both those papers were mainly focused on the case in which $\B(n, k, \A)$ has a constant combinatorics when $\A$ changes, it turns out that the case in which the combinatorics of $\B(n, k, \A)$ changes is quite interesting as it classifies special configurations of points in the $k$-dimensional space. In this paper we provide an example of this fact elucidating the connection between the well known generalized Sylvester's and orchard problems and the combinatorics of $\B(n, k, \A)$. In particular we point out how this connection could be helpful to address those old but still open problems.
\end{abstract}


\section{Introduction}
The study of special configurations of lines and points in the plane is a classic subject even if several problems have been solved only recently or are still open. Among them the generalized Sylvester's problem and orchard problem play a key role in our paper. \\
In 1893 Sylvester \cite{Syl} posed the problem ``\textit{prove that it is not possible to arrange any finite number of real points so that a right line through every two of them shall pass through a third, unless they all lie in the same right line}".
This problem was solved, among others, in 1944 by Gallai (see \cite{Gal}) and his solution is known nowadays as the Sylvester-Gallai Theorem. If we call a line passing through exactly two points \textit{ordinary}, a generalization of this problem is to count the minimum number of ordinary lines given $n$ points in $\RR^2$. 
Green and Tao in 2013, \cite{Green}, described the most advanced result about ordinary lines proving that if $n$ is sufficiently large, then there are at least $[\frac{n}{2}]$ ordinary lines. 
The Sylvester's problem can also be posed in its dual form:
``given $n$ lines in the real projective plane, there must be a point of intersection of just two lines'', and the point is called \textit{ordinary point}. 
The dual of Green and Tao's Theorem states that $n$ lines in projective plane intersect in at least $[\frac{n}{2}]$ ordinary points if $n$ is large enough.\\
The orchard problem was posed around 200 years ago and in its more general form asks which is the maximum number $t_3(n)$ of 3-point lines attainable with a configuration of $n$ points. Its dual form is equivalent to ask which is the maximum number of multiplicity $3$ intersections attainable with a configuration of $n$ lines. So far the most advanced known result is due to Burr, Gr\"{u}nbaum and Sloane (see \cite{Grum}) in 1974,where the authors provided a lower bound for $t_3(n)$.\\
In this paper we connect those two problems to the study of the combinatorics of the discriminantal arrangement associated to non very generic arrangements of $n$ lines in the affine and projective real plane. Introduced by Manin and Schechtman in 1989, the {\it  discriminantal arrangement} $\B(n, k, \A)$ is an arrangement of hyperplanes, constructed from a generic arrangement $\A$, generalizing classical braid arrangements (cf. \cite{man} p.209). The combinatorics of $\B(n, k, \A)$ is known to be constant when $\A$ varies in an open Zarisky set $\mathcal{Z}$ in the space of all generic arrangements. In \cite{athana}, Athanasiadis provided a description of this combinatorics proving a conjecture by Bayer and Brandt,  \cite{BB}, who called $\A$ \textit{very generic} if $\A \in \mathcal{Z}$ and \textit{non very generic} otherwise. We will keep those names throughout all the paper.\\
Few years before the work of Manin and Schechtman, Crapo \cite{crapo} introduced a notion equivalent to the discriminantal arrangement one 
starting from realizable matroids and called them \textit{geometry of circuits}. In his paper Crapo provided an example of an arrangement $\A$ of 6 lines in the real plane such that $\A\notin \mathcal{Z}$, pointing out how the directions at infinity of the lines in such arrangement give rise to a special configuration of $6$ points in the projective line.\\  
In 2016, Libgober and third author \cite{sette} gave a sufficient geometric condition for an arrangement $\A$ to be non very generic. 
In particular, in the case $k = 3$, their result shows that multiplicity $3$ codimension $2$ intersections of hyperplanes in $\B(n, 3, \A)$ appear (i.e. $\A \notin \mathcal{Z}$) if and only if collinearity conditions for points at infinity of lines, intersections of certain planes in $\A$, are satisfied. Thus confirming what already pointed out by Crapo, i.e. that combinatorics of the discriminantal arrangement $\B(n, k, \A)$ for $\A \notin \mathcal{Z}$ is strongly related to the problem of special configurations of points in the space. 
Two subsequent papers by Sawada, Yamagata and the third author (see \cite{SoSuSi}, \cite{SuSiSo}) went in the same direction connecting the combinatorics of $\B(6, 3, \A)$, for $\A \notin \mathcal{Z}$, to the Pappus's hexagon Theorem and the Hesse configuration in the complex case. \\
In this paper we firstly connect the dual orchard problem to the combinatorics of $\B(n, 2, \A)$, with $\A$ non very generic arrangement of $n$ lines in the real plane, then we provide an example of how to build, from a non very generic arrangement of $6$ planes in the real space, two special configuration of $12$ lines in the projective real plane one with minimum number of ordinary points and one with maximum number of multiplicity $3$ intersections. Hence we state Theorem \ref{thm:main} which provide a purely combinatorial way to build such examples and, finally, we generalize Theorem \ref{thm:main} into the Conjecture \ref{con} which should provide a way to inductively build arrangements of $n$ lines in real projective plane with low number of ordinary points or high number of multiplicity $3$ points starting from of a non very generic arrangement of $m < < n$ planes in the real space.\\
The content of the paper is as follows. In Section \ref{pre} we recall the definition of the discriminantal arrangement and few basic known results. In Section \ref{sec:motivex} we analyze Crapo's example, providing several examples of different combinatorics of $\B(6,2,\A)$, when $\A$ varies among all generic arrangements in $\RR^2$. 
In Section \ref{sec:Orch} we relate the dual orchard problem to the combinatorics of $\B(n, 2, \A)$, with $\A$ a non very generic arrangement in the real plane. Finally in Section \ref{sec:main} we state the main Theorem of the paper together with our Conjecture.


\section{Preliminaries}\label{pre}

\subsection{Discriminantal arrangement}\label{discarr}

Let $H^0_i, i=1,...,n$ be a generic arrangement $\A$ in $\CC^k, k<n$ i.e. 
a collection of hyperplanes such that $\codim \bigcap_{i \in K,
 \mid K\mid=p} H_i^0=p$ for any $0 \leq p \leq k$. The space of parallel translates $\SS(H_1^0,...,H_n^0)$ (or simply $\SS$ when the 
dependence on $H_i^0$ is clear or not essential)
is the space of $n$-tuples $H_1,...,H_n$ such that either $H_i \cap H_i^0=\emptyset$ or $H_i=H_i^0$ for any $i=1,...,n$.
One can identify $\SS$ with the $n$-dimensional affine space $\CC^n$ in such a way that $(H^0_1,...,H^0_n)$ corresponds to the origin.  In particular, an ordering of hyperplanes in $\A$ determines the coordinate system in $\SS$ (see \cite{sette}).\\
 We will use the compactification of $\CC^k$ viewing it as $\PP^k(\CC)\setminus H_{\infty}$ endowed with collection of hyperplanes $\bar H^0_i$ which are projecive closures of affine hyperplanes $H^0_i$.\\
Given a generic arrangement $\A$ in $\CC^k$ formed by hyperplanes $H^0_i, i=1,...,n$ {\it the trace at infinity}, denoted by $\A_{\infty}$, is the arrangement formed by hyperplanes $H_{\infty,i}=\bar H^0_i\cap H_{\infty}$. The trace $\mathcal{A}_{\infty}$ of an arrangement $\mathcal{A}$ determines the space of parallel translates $\mathbb{S}$ (as a subspace in the space of $n$-tuples of hyperplanes in $\mathbb{P}^k$).\\
Consider the closed subset of $\mathbb{S}$ formed by those collections which fail to form a generic arrangement. This subset of $\mathbb{S}$ is a union of hyperplanes $D_L \subset \mathbb{S}$ (see \cite{man}). Each hyperplane $D_L$ corresponds to a subset $L = \{ i_1, \dots, i_{k+1} \} \subset$  [$n$] $\coloneqq \{ 1, \dots, n \}$ and it consists of $n$-tuples of translates of hyperplanes $H_1^0, \dots, H_n^0$ in which translates of $H_{i_1}^0, \dots, H_{i_{k+1}}^0$ fail to form a general position arrangement. The arrangement $\B(n, k, \A)$ of hyperplanes $D_L$ is called \textit{discriminantal arrangement} and has been introduced by Manin and Schechtman in \cite{man}.  Notice that $\B(n, k, \A)$ depends on the trace at infinity $\mathcal{A}_{\infty}$ hence it is sometimes more properly denoted by $\B(n, k,\mathcal{A}_{\infty})$.\\
Let $\alpha_i = (a_{i1}, \dots , a_{ik}$) be  the normal vectors of hyperplanes $H^0_i$, $1 \leq i \leq n$, in the generic arrangement $\mathcal{A}$ in $\C^k$. Normal here is intended with respect to the usual dot product $$(a_1, \ldots, a_k)\cdot (v_1,\ldots, v_k)=\sum_i a_iv_i \quad .$$
Then the normal vectors to the hyperplanes $D_L$, $L = \{i_1 < \dots < i_{k+1} \} \subset$ [$n$] in $ \mathbb{S} \simeq \mathbb{C}^n$ are nonzero vectors of the form 
\begin{equation}\label{eq:normvec}
\alpha_L = \sum^{k+1}_{j=1} (-1)^i \det (\alpha_{i_1}, \dots, \hat{\alpha_{i_j}}, \dots, \alpha_{i_{k+1}})e_{i_j} \quad ,
\end{equation} 
where $\{e_j\}_{1\leq j \leq n}$ is the standard basis of $\mathbb{C}^n$ (cf. \cite{BB}). 

\subsection{Intersection lattice of the discriminantal arrangement}
It is well known (see, among others \cite{CrapoCTS},\cite{man}) that there exists an open Zarisky set $\mathcal{Z}$ in the space of generic arrangements of $n$ hyperplanes in $\RR^k (\CC^k)$, such that the intersection lattice of the discriminantal arrangement $\mathcal{B}(n,k,\A)$ is independent from the choice of the arrangement $\A \in  \mathcal{Z}$. Bayer and Brandt in \cite{BB} call the arrangements $\A \in  \mathcal{Z}$ \textit{very generic} and the one which are not in $\mathcal{Z}$, \textit{non very generic}. 
We will use their terminology in the rest of this paper. The name very generic is motivated by the fact that, in this case the number of intersections in the intersection lattice $\mathcal{L}(\mathcal{B}(n,k,\A))$ is the largest possible between all the discriminantal arrangements $B(n,k,\A')$, when $\A'$ ranges between all generic arrangements of $n$ hyperplanes in $\RR^k (\CC^k)$.\\
In \cite{CrapoCTS} Crapo proved that the intersection lattice of $\mathcal{B}(n,k,\A), \A \in  \mathcal{Z}$ is isomorphic to the Dilworth completion of the $k$-times lower-truncated Boolean algebra $B_n$ ( see Theorem 2. page 149 ). A more precise description of this lattice in the real case is due to Athanasiadis who proved in \cite{athana} a conjecture by Bayer and Brandt which stated that the intersection lattice of the discriminantal arrangement is isomorphic to the collection of all sets $\{S_1, \ldots, S_m\}$, where $S_i$ are subsets of $[n]$, each of cardinality at least $k+1$, such that
\begin{equation}\label{eq:vgcon}
\mid \bigcup_{i \in I} S_i \mid > k + \sum_{i \in I}(\mid S_i \mid - k) \mbox{ for all } I \subset [m]=\{1,\ldots,m\}, \mid I \mid \geq 2 \quad .
\end{equation}
The isomorphism is the natural one which associates to the set $S_i$ the space $D_{S_i}=\bigcap_{L \subset S_i, \mid L \mid=k+1} D_L$ intersection of all hyperplanes in the discriminantal arrangement indexed in $S_i$. In particular if the condition in equation (\ref{eq:vgcon}) is satisfied, this implies that the subspaces $D_{S_i}, i=1,\ldots, m$ intersect transversally or, equivalently, that $$\rank \bigcap_{i=1}^m D_{S_i}=\sum_{i=1}^m ( \mid S_i \mid -k )$$ being  $\rank~D_{S_i}=\mid S_i \mid -k$ (as proved in Corollary 3.6 in \cite{athana}).\\
Notice that while there are two different descriptions of the intersection lattice in the very generic case, very few is known about it in the non very generic case. 


\section{Motivating example: 6 lines in $\RR^2$}\label{sec:motivex}

In this section we study the combinatorics of the discriminantal arrangement $\mathcal{B}(6,2,\A)$ with $\A$ generic arrangement of $6$ lines in $\RR^2$. The arrangement $\mathcal{B}(6,2,\A)$ is a central arrangement in $\R^6$ essential in dimension $4$, i.e. its essentialization is an arrangement in $\R^4$. It has $\binom{6}{3}$ $=20 $ hyperplanes and in \cite{sette} authors show that its combinatorics in rank $2$ is constant equal to the one given by Athanasiadis for any choice of $\A$, that is it contains $\binom{6}{4}$ $=15$ intersections of multiplicity $4$ and the remaining intersections of multiplicity $2$. Hence in order to completely describe the combinatorics of $\mathcal{B}(6,2,\A)$ we only need to study the intersections in rank $3$.\\
In particular when $\A \in \Cal Z$ there are exactly $6$ intersections of multiplicity $5$ and the remaining intersections are either transversal of multiplicity $3$, i.e. $3$ hyperplanes intersecting transversally, or multiplicity $5$, i.e. an hyperplane intersecting transversally the multiplicity $4$ intersections in rank $2$. When $\A \notin \Cal Z$, then intersections of multiplicity $4$ can appear. The first to study this case has been Crapo in \cite{CrapoCTS}. Following his notation (see Section~7 of \cite{crapo}) we will call \textit{quadrilateral set} a configuration of six lines in the plane that intersect in exactly four triple points. By definition of discriminantal hyperplanes, if a generic arrangement $\A$ admits a translate which is a quadrilateral set, then $\A$ is non very generic. 


\begin{ex}\label{ex:2points}
Consider the quadrilateral set $\A^t$ shown in Figure~\ref{fig:triangles} translated of a generic arrangement $\A$.  Lines $\ell_1,\dots,\ell_6$ in $\A^t$ have normal vectors $$\alpha_1=(-2,2),\alpha_2= (-3, 4 ),\alpha_3=(0, 6 ), \alpha_4= (2,4), \alpha_5=(3,2) \mbox{ and }\alpha_6=(-1,2).$$  
In this case the $4$ triple intersection points in $\A^t$ correspond to the $4$ sets of $3$ indices $i$ of lines $\ell_i$ given by $I=\{\{1, 2, 3\}, \{1, 4, 6\}, \{2, 5, 6\}, \{3, 4, 5\}\}$.
This is equivalent to the fact that the hyperplanes $D_{L}, L \in I$ are $4$ hyperplanes in $\mathcal{B}(6,2,\A)$ with $$\A^t \in X=\cap_{L \in I} D_L \neq \emptyset \quad .$$ That is $X$ is an intersection of multiplicity $4$ in rank $3$. Indeed it cannot be an intersection in rank $4$ since $\mathcal{B}(6,2,\A)$ is central in rank $4$ and all arrangements belonging to the center of $\mathcal{B}(6,2,\A)$ are exactly the translated of $\A$ which are central arrangements.\\

\end{ex}
\begin{figure}[ht!]
    \centering
    \includegraphics[scale=0.18]{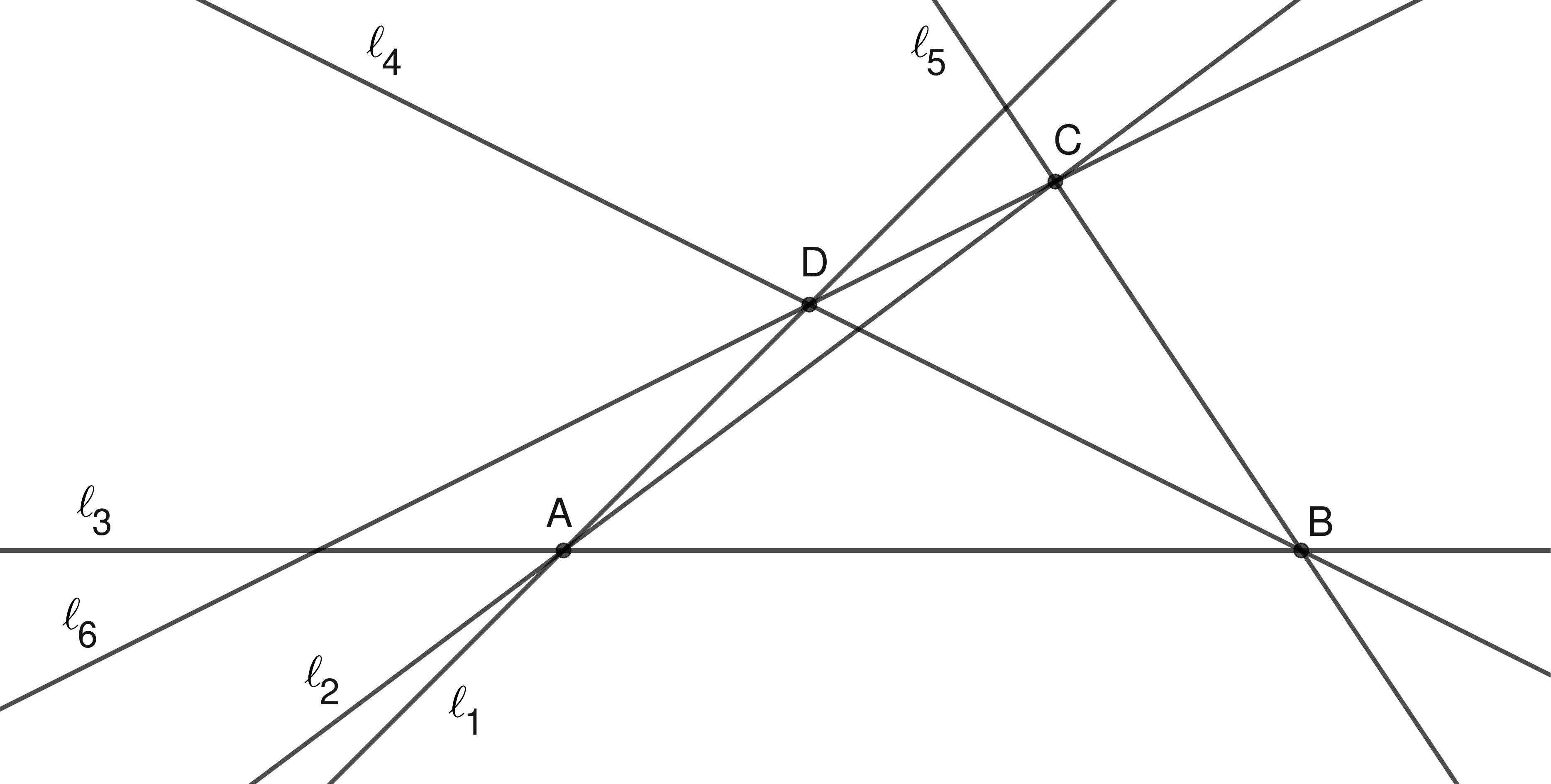}
    \caption{}
    \label{fig:triangles}
\end{figure}
\noindent
It is not difficult to verify that the arrangement $\A$ described in Example \ref{ex:2points} admits another translation $\A^{t'}$ which is a quadrilateral set with the $4$ triple points intersections of the lines indexed in  $I = \{\{1, 2, 6\}, \{1, 3, 4\}, \{2, 3, 5\}, \{4, 5, 6\}\}$. Indeed this follows from the general statement that quadrilateral sets always appear in couple (see \cite[Vol. I, p.62]{Baker}).\\
This implies that intersections in rank $3$ of multiplicity $4$ in $\mathcal{B}(6,2,\A)$ are always in even number. How many of those intersections can we have?


\begin{figure}[ht!]
    \centering
    \includegraphics[scale=0.18]{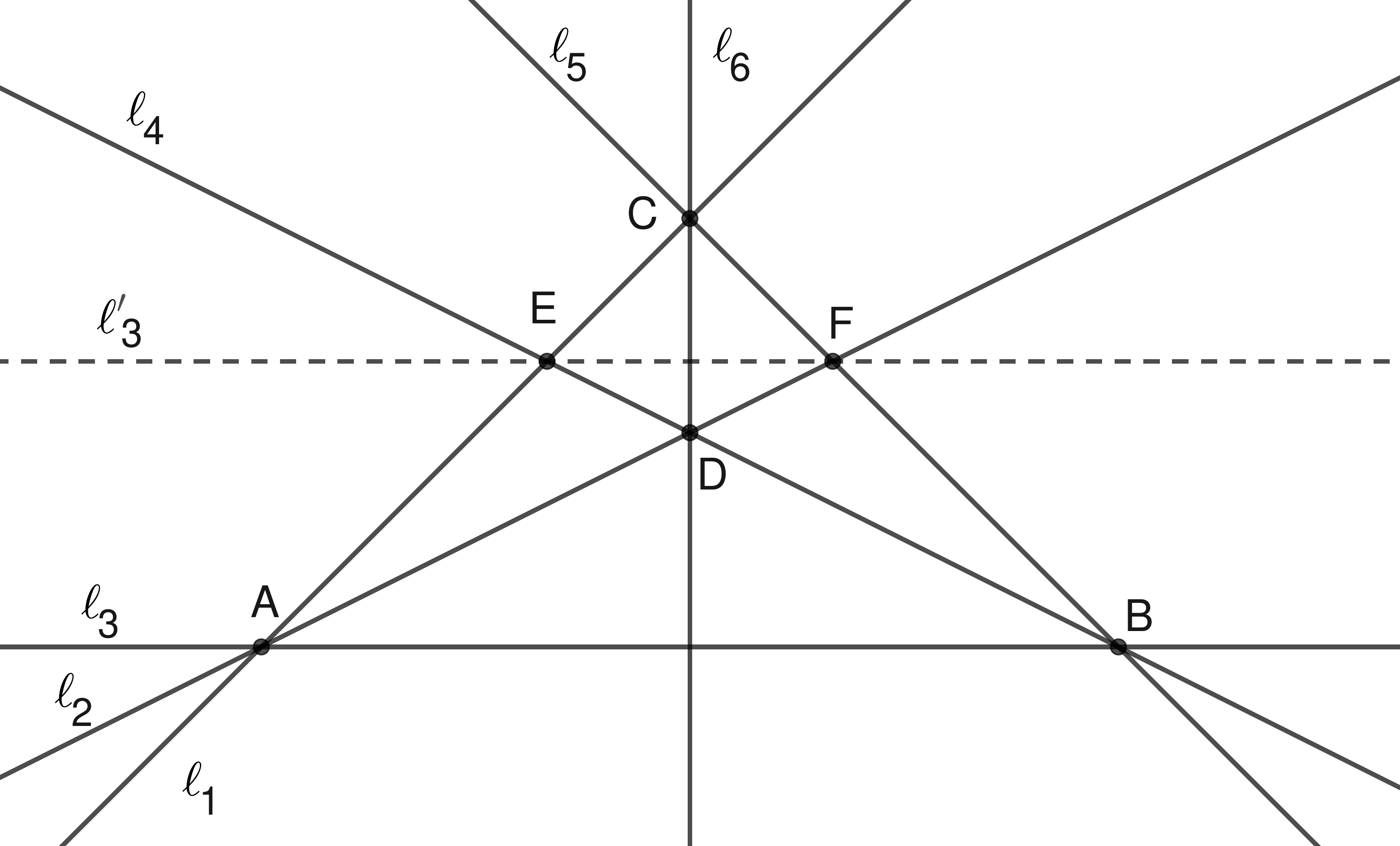}
    \caption{}
    \label{fig:IsoscelesTriangle}
\end{figure}
\begin{ex} \label{ex:4points}
Consider the quadrilateral set $\A^t$ shown in Figure~\ref{fig:IsoscelesTriangle} translated of a generic arrangement $\A$.  Lines $\ell_1,\dots,\ell_6$ in $\A^t$ have normal vectors $$\alpha_1=(-2,2),\alpha_2= (-2, 4 ),\alpha_3=(0, 6 ), \alpha_4= (2,4), \alpha_5=(2,2) \mbox{ and }\alpha_6=(1,0).$$  
In this case $\A$ admits $4$ different translated that are quadrilateral sets having triple points which are intersections of lines indexed in $I_1=\{\{1, 2, 3\}, \{1, 5, 6\}, \{2, 4, 6\}, \{3, 4, 5\}\}$ ( the one depicted in Figure~\ref{fig:IsoscelesTriangle}),$I_2=\{\{1, 2, 6\}, \{1, 3, 5\}, \{2, 3, 4\}, \{4, 5, 6\}\}$,$I_3=\{\{1, 3, 4\}, \{1, 5, 6\},$ $ \{2, 3, 5\}, \{2, 4, 6\}\}$ and $I_4=\{\{1, 3, 5\}, \{1, 4, 6\}, \{2, 3, 4\}, \{2, 5, 6\}\}$. 
\end{ex}
\noindent
Figure~\ref{fig:triangles} and Figure~\ref{fig:IsoscelesTriangle} show two examples with respectively one quadrilateral set and its second configuration, and two different quadrilateral sets and their second configurations.




 
\begin{figure}[ht!]
    \centering
    \includegraphics[scale=0.18]{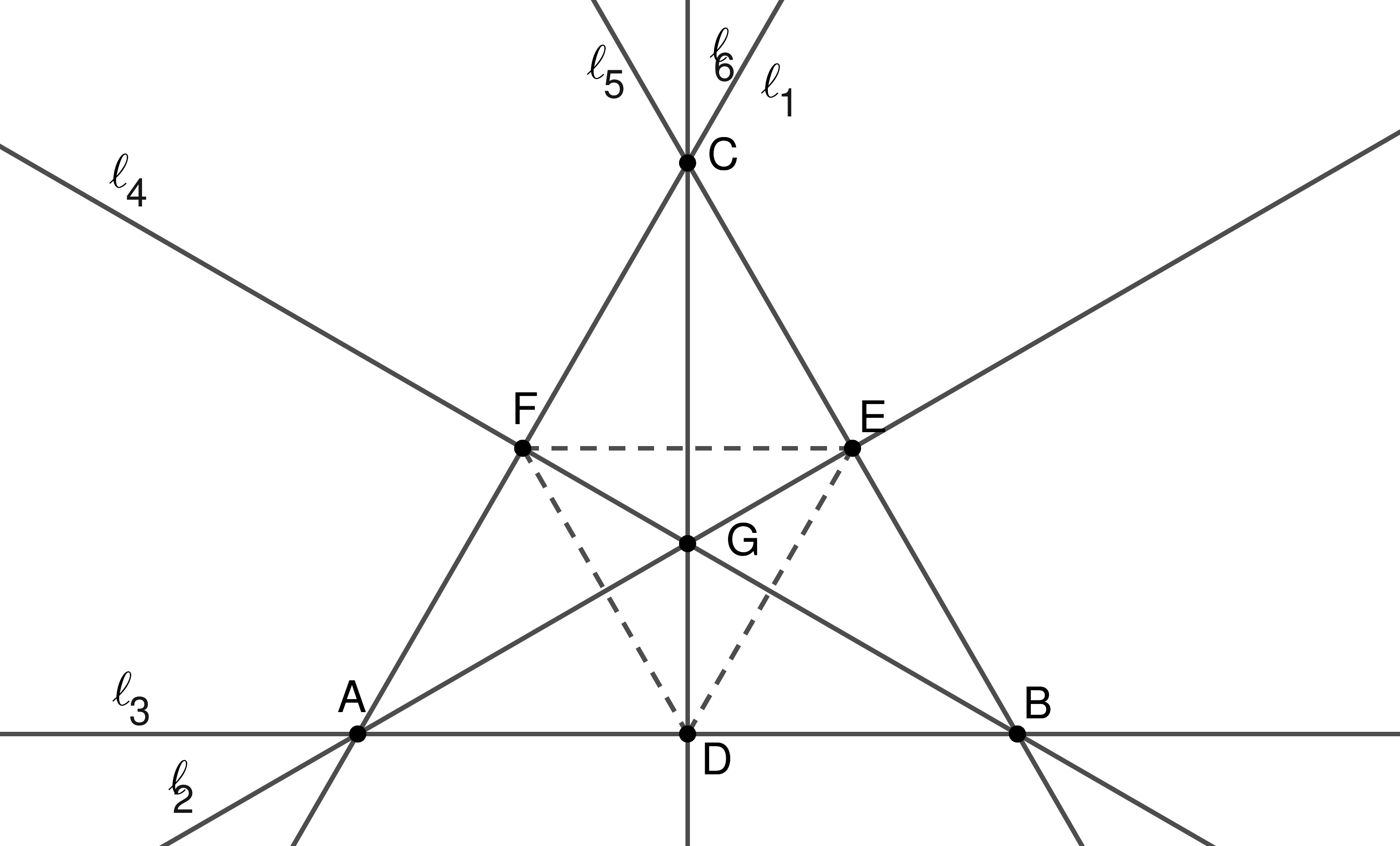}
    \caption{Equilateral triangle}
    \label{Figure:EquilateralTriangle}
\end{figure}

\begin{ex}
Consider the quadrilateral set $\A^t$ shown in Figure~\ref{Figure:EquilateralTriangle} translated of a generic arrangement $\A$.  Lines $\ell_1,\dots,\ell_6$ in $\A^t$ have normal vectors $$\alpha_1=(-1, \sqrt{3}),\alpha_2= (1, \sqrt{3} ),\alpha_3=(1, 0 ), \alpha_4= (-\sqrt{3},1), \alpha_5=(\sqrt{3},1) \mbox{ and }\alpha_6=(0,1).$$  
In this case $\A$ admits $8$ different translated that are quadrilateral sets having triple points intersections of lines indexed in $I_1=\{\{1, 2, 3\}, \{1, 5, 6\}, \{2, 4, 6\}, \{3, 4, 5\}\}$ ( the one depicted in Figure~\ref{Figure:EquilateralTriangle} ), $I_2=\{\{1, 2, 3\}, \{1, 4, 5\}, \{2, 4, 6\}, \{3, 5, 6\}\}$, $I_3=\{\{1, 2, 4\}, \{1, 3, 5\}, \{2, 3, 6\}, \{4, 5, 6\}\}$,
$I_4=\{\{1, 2, 5\}, \{1, 3, 6\}, \{2, 4, 6\}, \{3, 4, 5\}\}$, $I_5=\{\{1, 2, 6\}, \{1, 3, 5\}, \{2, 3, 4\}, \{4, 5, 6\}\}$, \\ $I_6=\{\{1, 2, 6\}, \{1, 4, 5\}, \{2, 4, 5\}, \{3, 4, 6\}\}$, $I_7=\{\{1, 3, 4\}, \{1, 5, 6\}, \{2, 3, 5\}, \{2, 4, 6\}\}$  \\ and $I_8=\{\{1, 3, 5\}, \{1, 4, 6\}, \{2, 3, 4\}, \{2, 5, 6\}\}$.
\end{ex}
\noindent
In order to complete the classification of all the possible combinatorics of $\mathcal{B}(6,2,\A)$, it is left to show if there exist real arrangements $\A$ such that $\mathcal{B}(6,2,\A)$ admits $m$ intersections of multiplicity $4$ in rank $3$ with $m=6$ and $m>8$. While we conjecture that the answer is yes for the case $m=6$ and no for $m>8$, this problem is left open.

\section{The orchard problem and the combinatorics of $\mathcal{B}(n,2,\A)$.}\label{sec:Orch}
The examples of Section \ref{sec:motivex} suggest that the orchard problem, i.e. the study of maximum number of intersections of multiplicity $3$ can be rewritten in terms of combinatorics of the discriminantal arrangement $\mathcal{B}(n,2,\A)$. In this section we introduce the Notation and the Definitions needed to formulate the orchard problem in terms of combinatorics of the discriminantal arrangement.

\subsection{Non very generic intersections}

Let $\A$ be a generic arrangement of $n$ lines in $\R^2$ ( equivalently $\C^2$) and denote by $\L_r(\mathcal{B}(n,2,\A))$ the rank $r$ elements in the intersection lattice of $\mathcal{B}(n,2,\A)$. In this case hyperplanes $D_{L} \in \mathcal{B}(n,2,\A)$ are indexed by subsets $L \subset [n]$, $\mid L \mid =3$. By combinatorics of very generic discriminantal arrangement we get that $r$ hyperplanes $D_{L_i}, i=1, \ldots , r$ either intersect transversally or there exists a partition $\{S_1,...,S_m\}$ of $\{1,\ldots ,r\}, |S_i|\geq3$ which satisfies Athanasiadis condition (in equation (\ref{eq:vgcon})) such that $\cap_{i=1}^r D_{L_i}=\cap_{i=1}^m D_{S_i}$ . Notice that in this case $D_{S_i}$ intersects transversally and $\rank({\cap_{i=1}^r D_{L_i}})=\sum_{i=1}^m ( \mid S_i\mid -2 )$ (see corollary 3.6 in \cite{athana}). On the other hand in the non very generic case we can have $r$ hyperplanes which do not intersect transversally even if for any subset $I \subset [r], \mid I \mid \geq 2, \cap_{i \in I}D_{L_i} \neq D_K \in \L(\mathcal{B}(n,2,\A)), K \subset [n], \mid K \mid > 3$. We recall the definition of simple intersection given in \cite{SY}.

\begin{defi}\label{def:tower} An element X $\in\L_r(\mathcal{B}(n,2,\A))$ is called a simple intersection if $X=\cap_{i=1}^m D_{L_i}$, $D_{L_i} \in \mathcal{B}(n,2,\A)$, $m\geq r$ and for every subset $I\subset [m],$ $\mid I \mid \geq 2,\cap_{i \in I}D_{L_i} \neq D_K \in \L(\mathcal{B}(n,2,\A)), K \subset [n], \mid K \mid > 3$. In particular if $m>r$ we call X a non very generic simple intersection. We call $m$ multiplicity of X.
\end{defi}

\begin{rem}\label{rem 4.2}
Notice that an element $\A'$ in a simple intersection X $\in\L_r(\mathcal{B}(n,2,\A))$ corresponds to a translate of the generic arrangement $\A$ containing only double and triple intersection points. Indeed an arrangement $\A'$ of n lines in $\R^2(\C^2)$ in which lines $l_i$'s intersect in points $P=\cap_{i \in K} l_i$,  $\mid K \mid > 3$ belongs to $D_K = \underset{\underset{\mid L \mid =3}{L \subset K}}{\cap D_L}$ by definition.\\
Viceversa if a translate $\A'$ of a generic arrangement $\A$ of $n$ lines in $\R^2(\C^2)$ contains lines intersecting only in double points and triple points $P_i, i=1, \ldots ,m$ of the form $P_i=\cap_{j \in L_i}l_j, L_i \subset [n], \mid L_i \mid=3$, then $\A'$ is an element in the simple intersection $ X=\cap_{i=1}^m D_{L_i} \in \L_r(\mathcal{B}(n,2,\A))$.
\end{rem}

\noindent Based on the above Remark \ref{rem 4.2} and description of the combinatorics of very generic arrangement provided by Athanasiadis, the following Proposition holds.

\begin{prop}\label{prop 4.3}
If a generic arrangement $\A$ of n lines in $\R^2(\C^2)$ is very generic then all simple intersection $ X=\cap_{i=1}^m D_{L_i}\in\L_r(\mathcal{B}(n,2,\A))$ have multiplicity $m$ equals to their rank $r$.

\end{prop}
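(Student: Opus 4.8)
The plan is to combine Athanasiadis's description of the very generic intersection lattice — equation~(\ref{eq:vgcon}) together with the transversality it guarantees — with a codimension count, so that the statement reduces to the assertion that the hyperplanes $D_{L_1},\dots,D_{L_m}$ are linearly independent. I assume throughout that $m\ge 1$ and, as is implicitly intended in Definition~\ref{def:tower}, that the $D_{L_i}$ are pairwise distinct; I write $\rank$ for rank in $\L(\B(n,2,\A))$, i.e.\ codimension in $\SS$, and I call a collection $\{S_1,\dots,S_q\}$ of subsets of $[n]$ of cardinality $\ge 3$ \emph{valid} if it satisfies~(\ref{eq:vgcon}).

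First I would make the reduction: an intersection of $m$ hyperplanes has rank $\le m$, so it suffices to show $D_{L_1},\dots,D_{L_m}$ are linearly independent, for then $\rank(X)=m$ and hence $r=m$. Next, because $\A$ is very generic, Athanasiadis's theorem lets me write $X=\bigcap_{j=1}^q D_{S_j}$ for the unique valid collection representing $X$ in the lattice, with $r=\rank(X)=\sum_{j=1}^q(|S_j|-2)$; applying~(\ref{eq:vgcon}) to pairs also gives $|S_j\cap S_{j'}|\le 1$ for $j\ne j'$. Since $D_{L_i}\supseteq X$ for every $i$, the lattice isomorphism forces $L_i\subseteq S_j$ for some $j$, and that $j$ is unique because $|L_i|=3>1\ge|S_j\cap S_{j'}|$; thus I obtain a partition $[m]=\bigsqcup_{j=1}^q I_j$, where $I_j=\{\,i:L_i\subseteq S_j\,\}$.

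Then I would run the codimension count. Put $Y_j=\bigcap_{i\in I_j}D_{L_i}$. Since each $L_i\subseteq S_j$ for $i\in I_j$ we have $Y_j\supseteq D_{S_j}$, hence $\rank(Y_j)\le|S_j|-2$, while also $\rank(Y_j)\le|I_j|$; and from $X=\bigcap_{j}Y_j$,
\[\sum_{j=1}^q(|S_j|-2)=\rank(X)\le\sum_{j=1}^q\rank(Y_j)\le\sum_{j=1}^q(|S_j|-2).\]
So every inequality is an equality: $\rank(Y_j)=|S_j|-2=\rank(D_{S_j})$, and with $Y_j\supseteq D_{S_j}$ this gives $Y_j=D_{S_j}$ for each $j$ (in particular $I_j\ne\varnothing$, since $\rank(Y_j)\ge 1$). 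Now if some $|S_{j_0}|\ge 4$, then $\bigcap_{i\in I_{j_0}}D_{L_i}=D_{S_{j_0}}$ has rank $\ge 2$, so $|I_{j_0}|\ge 2$, and this equality with $|S_{j_0}|>3$ contradicts the definition of a simple intersection. Hence $|S_j|=3$ for all $j$, so $r=\sum_j(|S_j|-2)=q$. Finally each inclusion $L_i\subseteq S_{j(i)}$ with $|L_i|=3=|S_{j(i)}|$ is an equality $L_i=S_{j(i)}$, and $i\mapsto S_{j(i)}$ is a bijection $[m]\to\{S_1,\dots,S_q\}$ — surjective because no $I_j$ is empty, injective because the $L_i$ are distinct — whence $m=q=r$.

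I expect the delicate point to be the identification $Y_j=D_{S_j}$ — that the triples $L_i$ lying inside a fixed $S_j$ already cut out $D_{S_j}$ transversally — together with the bookkeeping that Athanasiadis's canonical collection is irredundant (which is what forces $I_j\ne\varnothing$). This is precisely where very-genericity is essential, and it is what makes the displayed chain of inequalities collapse to equalities. As an alternative to the partition argument, one can take a minimal linear dependence among the $D_{L_i}$, supported on a set $I$ with $|I|\ge 3$ (two distinct $D_L$'s are always independent, since they share the centre of the central arrangement $\B(n,2,\A)$); being a circuit, and hence connected, this restriction of the very generic discriminantal matroid cannot split along the valid collection of $\bigcap_{i\in I}D_{L_i}$, forcing $\bigcap_{i\in I}D_{L_i}=D_K$ with $K=\bigcup_{i\in I}L_i$ and $|K|=|I|+1>3$ — again contradicting simplicity.
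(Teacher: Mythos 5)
Your proof is correct, and it rests on the same key input as the paper's: Athanasiadis's description of the very generic intersection lattice (Corollary 3.6 of \cite{athana}). Where you differ is in how much of the argument is made explicit. The paper's proof is a one-line application of that corollary to the family $\{L_1,\dots,L_m\}$ itself, writing $r=\sum_{i=1}^m(|L_i|-2)=m$; this silently presupposes that for a simple intersection of a very generic arrangement the hyperplanes $D_{L_1},\dots,D_{L_m}$ meet transversally, i.e.\ that $\{L_1,\dots,L_m\}$ satisfies condition (\ref{eq:vgcon}), and the simplicity hypothesis of Definition~\ref{def:tower} is never invoked explicitly. Your argument supplies exactly that missing step: you pass to a valid collection $\{S_1,\dots,S_q\}$ with $X=\bigcap_j D_{S_j}$, partition the $L_i$ among the blocks, and force equality in the codimension count, so that any block with $|S_j|\ge 4$ would produce a subfamily with $\bigcap_{i\in I_j}D_{L_i}=D_{S_j}$, which is precisely what simplicity forbids; hence every block is a triple and $m=q=r$. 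The one place where you lean on slightly more than Corollary 3.6 is the claim that $D_{L_i}\supseteq X$ forces $L_i\subseteq S_j$ for some $j$ (that the atoms below the flat of $\{S_1,\dots,S_q\}$ are exactly the $D_L$ with $L$ contained in a block); this is part of the standard description of the very generic lattice and is a citation no heavier than the paper's own. Net effect: your route is a rigorous expansion of the paper's citation-style proof, with the added value of showing exactly where very-genericity and where simplicity each enter; the closing sketch via circuits is plausible but not needed.
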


 \begin{proof}
 If $\A$ is a very generic arrangement of n lines in $\R^2(\C^2)$ then any simple intersection $X=\cap_{i=1}^m D_{L_i}\in\L_r(\mathcal{B}(n,2,\A))$ satisfies the condition  $r=\rank(\cap_{i=1}^m D_{L_i})=\sum_{i=1}^m ( \mid L_i \mid-k )=\sum_{i=1}^m (3-2)=m$ (see corollary 3.6 in \cite{athana}), that is $r=m$. \\
 \end{proof}
 
 \noindent
 By definition of simple intersection $X\in\L_r(\mathcal{B}(n,2,\A))$, it follows that if the multiplicity $m$ is $m=r$ then $X$ is transversal intersection of exactly $m=r$ hyperplanes, that is $m=r$ is the minimum value of $m$ when the arrangement $\A$ varies among all generic arrangements of $n$ lines in $\R^2$ ( equivalently $\C^2$). One could ask which is the maximum value for $m$. Since Proposition \ref{prop 4.3} states that when the arrangement $\A$ is very generic then all simple intersections $X\in\L_r(\mathcal{B}(n,2,\A))$ have multiplicity $m=r$, then if $\A$ is an arrangement such that $\L_r(\mathcal{B}(n,2,\A))$ admits simple intersections of multiplicity $m>r$, then $\A$ has to be non very generic. While the fact that $m=r$ is minimum value follows trivially from the definition of simple intersection, to study the maximum value for $m$ is quite difficult but interesting problem. Indeed the following proposition holds.  
 
\begin{prop}\label{prop:equiv} A generic arrangement $\A$ of $n$ lines in $\R^2(\C^2)$ admits a translate $\A'$ which is an arrangement of $n$ lines in $\R^2(\C^2)$ with maximum number of multiplicity 3 intersection points if and only if it exists a simple intersection $X  \in \L_{n-3}(\B(n,2,\A))$ with maximum multiplicity $m$, i.e. for any generic arrangement $\overline{\A}$ of $n$ lines in $\R^2(\C^2)$ and simple intersections  $\overline{X}  \in \L_{n-3}(\B(n,2,\overline{\A}))$, multiplicity of $\overline{X} $ is smaller or equal than $m$.
\end{prop}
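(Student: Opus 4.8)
The plan is to set up a dictionary between translates of $\A$ lying in a simple intersection $X\in\L_{n-3}(\B(n,2,\A))$ and translates that realize many triple points, and then check that the numerical invariant "multiplicity $m$" on the discriminantal side matches the invariant "number of triple points" on the configuration side. The key bridge is Remark \ref{rem 4.2}: a translate $\A'$ of $\A$ lies in a simple intersection $X=\cap_{i=1}^m D_{L_i}$ of rank $r$ precisely when $\A'$ has only double points and exactly the $m$ triple points $P_i=\cap_{j\in L_i}\ell_j$ (and no point of multiplicity $>3$, which is what the condition $\cap_{i\in I}D_{L_i}\neq D_K$ for $|K|>3$ encodes). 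So the content to be proved is really twofold: (i) a translate with the maximum number of triple points among all translates of \emph{all} generic arrangements can be taken to have no points of multiplicity $\ge 4$, and (ii) for such a translate the rank of the corresponding intersection of discriminantal hyperplanes is exactly $n-3$.

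First I would handle the genericity/reduction step (i). Here one argues that a configuration of $n$ lines in $\R^2$ (or $\C^2$) maximizing the number of triple points may be assumed to have no higher multiplicity points: a point of multiplicity $t\ge 4$ can be resolved by an arbitrarily small perturbation of the $t$ lines through it, and the standard counting (each multiplicity-$t$ point, when split, produces $\binom{t}{3}\ge t-2\ge 2$ triple points, or at worst can be split so as not to decrease the triple-point count) shows this does not decrease the total number of triple points; iterating removes all multiplicities $\ge 4$. Moreover any such configuration is a translate of a \emph{generic} arrangement $\A'$ obtained by perturbing all pairwise intersections apart while keeping the prescribed concurrences, and the trace at infinity can be chosen generic, so it makes sense to speak of the discriminantal arrangement $\B(n,2,\A')$. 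This shows the "configuration maximum" is attained at an $\A'$ with only double and triple points, hence, by Remark \ref{rem 4.2}, inside some simple intersection $X'\in\L_r(\B(n,2,\A'))$.

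Next I would pin down the rank, step (ii). A generic translate $\A'$ always has $\binom{n}{2}$ intersection points and lives in the $n$-dimensional space $\SS\simeq\C^n$; imposing a single triple point $P_i=\cap_{j\in L_i}\ell_j$ is exactly one linear condition on $\SS$, namely membership in the hyperplane $D_{L_i}$. When $\A'$ has only double points besides its triple points, no two of these conditions coincide as lattice elements of rank $1$ that combine to a single $D_K$, so by definition $X'=\cap_i D_{L_i}$ is a simple intersection. Its rank is the codimension in $\SS$ of the locus of translates with \emph{at least} these concurrences; since a translate of a generic arrangement is determined up to the global translation freedom by its multiple points, and a configuration of $n$ lines with only double and triple points has a projective realization space of dimension $3$ (the group $\mathrm{PGL}$, or equivalently the affine picture: fixing three lines in general position pins everything), the locus has dimension $3$ in $\SS$, hence rank $r=n-3$. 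I would make this precise by the direction-vector computation: using \eqref{eq:normvec}, the normals $\alpha_{L_i}$ of the $D_{L_i}$ span a space of the expected codimension exactly when the corresponding configuration is rigid up to the $3$-dimensional family, and the "only double and triple points" hypothesis is what guarantees that the simple-intersection count $m$ equals the number of triple points.

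Finally I would assemble the equivalence. If $\A$ admits a translate $\A'$ with the maximal number $m$ of triple points, the above gives a simple intersection $X\in\L_{n-3}(\B(n,2,\A))$ of multiplicity $m$, and maximality transfers because every simple intersection $\overline X\in\L_{n-3}(\B(n,2,\overline\A))$ of multiplicity $m'$ produces, via Remark \ref{rem 4.2}, a translate of $\overline\A$ with $m'$ triple points, so $m'\le m$. Conversely, if $X\in\L_{n-3}(\B(n,2,\A))$ is a simple intersection of maximal multiplicity $m$, any $\A'\in X$ is a translate of $\A$ with exactly $m$ triple points and only double points otherwise, and this $m$ is maximal among all translates of all generic arrangements, since any competing translate with more triple points could be perturbed (step (i)) to one with only double and triple points, lying in a simple intersection of rank $n-3$ of \emph{its} discriminantal arrangement, contradicting maximality of $m$.

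The main obstacle I expect is step (ii), the rank computation: one must argue carefully that, for a translate with only double and triple points, the codimension in $\SS$ of the intersection $\cap_i D_{L_i}$ is exactly $n-3$ and not less — i.e. that the linear conditions $D_{L_i}$ are "as independent as the combinatorial data allows," which is really the statement that the realization space of such a point-line incidence structure has the expected dimension. This is a classical but slightly delicate rigidity fact (the $3$-dimensional freedom being the affine transformations of the plane, modulo the translation that is already quotiented out in $\SS$), and it is the place where the hypothesis "only double and triple points" (as opposed to allowing multiplicity $\ge 4$) is essential to avoid degenerate coincidences among the $D_{L_i}$.
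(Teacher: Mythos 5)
Your core bridge is the same as the paper's: Remark \ref{rem 4.2} identifies a translate with only double and triple points with an element of a simple intersection whose multiplicity equals the number of triple points, and the final assembly of the equivalence is the paper's argument. The genuine gap is in your step (ii). The claim that for a translate with only double and triple points the locus $\cap_i D_{L_i}$ has dimension $3$ in $\SS$ (``rank exactly $n-3$'', justified by ``fixing three lines in general position pins everything'') is false in general: a translate with a single triple point and all other intersections double lies in the single hyperplane $D_L$, a simple intersection of rank $1$ and dimension $n-1$; in general the dimension can be anything between $3$ and $n-1$, and a proposed computation with the normals \eqref{eq:normvec} would have to use, not merely the absence of points of multiplicity $\geq 4$, but the \emph{maximality} of the number of triple points. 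What is actually needed is only that the stratum of a translate realizing the maximum number of triple points has dimension exactly $3$: dimension $\geq 3$ is automatic, since every stratum containing a non-central translate is preserved by the $3$-dimensional group of homotheties and translations of the plane (equivalently, as in Example \ref{ex:2points}, the elements of rank $n-2$ consist exactly of the central translates), while dimension $\leq 3$ is the nontrivial direction and your rigidity claim neither proves it nor uses maximality. The paper avoids the rigidity statement altogether: it passes to the maximum of the multiplicity of simple intersections over \emph{all} ranks and all generic arrangements and invokes that $n-3$ is the largest rank in which non-central translates appear; this is terse, but it is a different (and not an overclaimed) route from your ``realization space has dimension $3$'' assertion, which as stated would fail.

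Two secondary remarks. Your step (i) (perturbing away points of multiplicity $\geq 4$) is not in the paper, which simply restricts throughout to translates with only double and triple points; the reduction is reasonable in spirit, but your count is off: resolving a $t$-fold point generically produces only double points (a $t$-fold point with $t\geq 4$ contributes no triple point to the orchard count, and it does not ``contain'' $\binom{t}{3}$ of them), so one must resolve carefully, e.g.\ moving one line off at a time so that a triple point survives. Finally, note that the quantifier in the statement compares against simple intersections of rank $n-3$ of \emph{all} generic arrangements $\overline{\A}$, so the transfer of maximality in your last paragraph should be phrased through that class rather than through translates of the fixed $\A$ only; with the rank issue repaired as above, that part matches the paper.
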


\begin{proof} By Remark \ref{rem 4.2} we know that an arrangement of n lines in $\R^2(\C^2)$ with only double and triple intersection points is an element in a simple intersection $X=\cap_{i=1}^m D_{L_i} \in\L_r(\B(n,2,\A))$ with multiplicity $m$ exactly equals to the number of triple intersection points. Hence the maximum number of triple points that an arrangement of n lines in $\R^2(\C^2)$ with only double and triple intersection can have is exactly equal to the maximum multiplicity that a simple intersection can have when both rank $r$ and $\A$ vary. Since  $n-3$ is the maximal rank in which non central arrangements appear, then the proof follows. 
\end{proof}
\noindent
Proposition \ref{prop:equiv} states the equivalence between the problem of studying simple non very generic intersections of $\B(n,2,\A)$ and the orchard problem.

\begin{ex}
Consider the arrangement $\A^t$ shown in Figure~\ref{fig:SevenLines} translated of a generic arrangement $\A$.  Lines $\ell_1,\dots,\ell_7$ in $\A^t$ have normal vectors $$\alpha_1=(-2, 2),\alpha_2= (-3,4),\alpha_3=(0,6 ), \alpha_4= (2,4), \alpha_5=(3,2), \alpha_6=(-1,2) \mbox{ and }\alpha_7=(-1.2,1.8).$$  Then $\A^t$ is an element in the not empty simple intersection $X=\cap_{L \in I} D_{L}$, \\$I= \{\{1, 2, 3\}, \{1, 4, 6\}, \{1, 5, 7\}, \{2, 4, 7\} \{2, 5, 6\}, \{3, 4, 5\}\}$. $X$ is a simple non very generic intersection in rank $7-3=4$ having multiplicity $6$, the maximum multiplicity attainable for such intersections when $\A \notin \Cal Z$.
\end{ex}
\begin{figure}[ht!]
    \centering
    \includegraphics[scale=0.18]{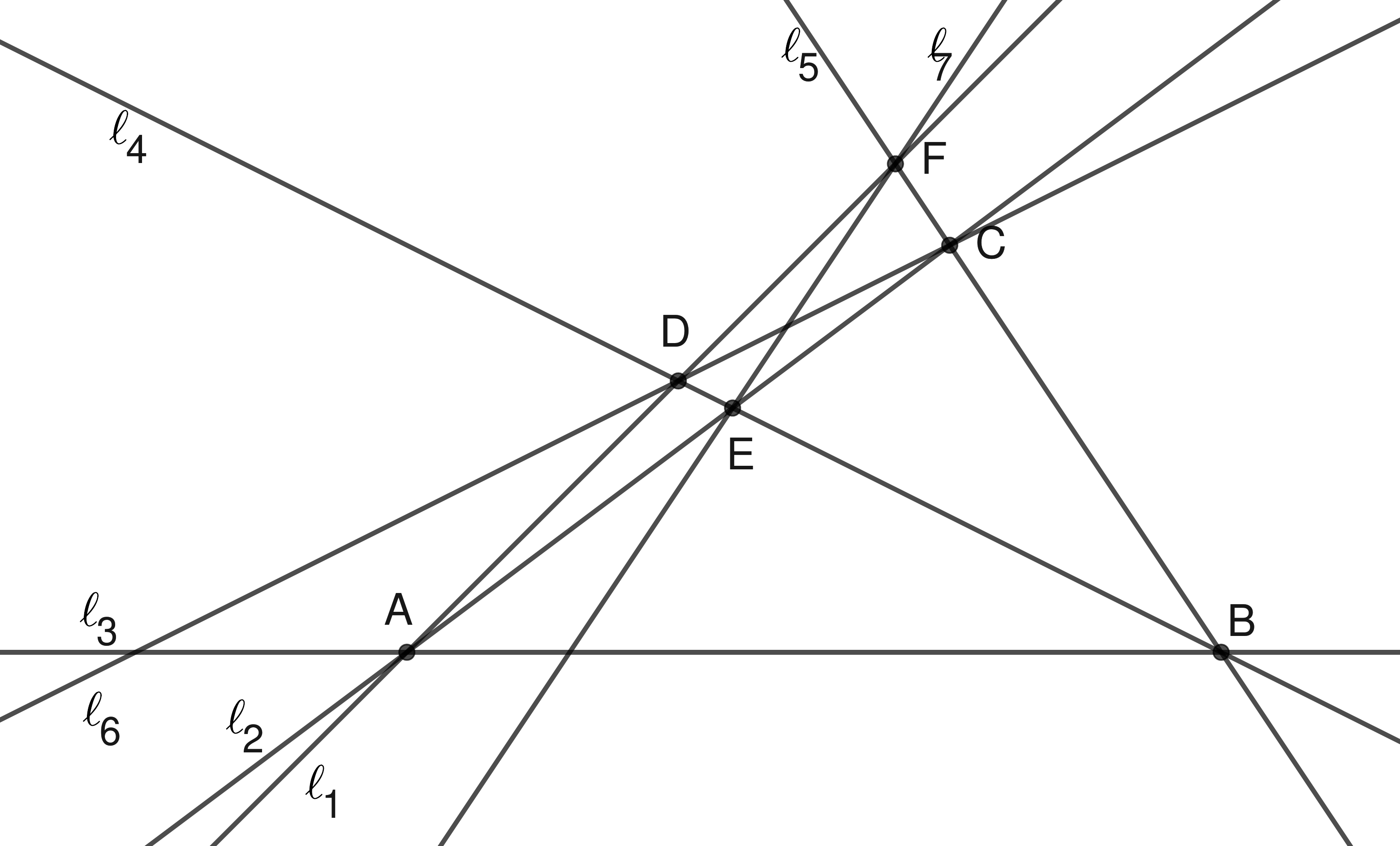}
    \caption{Seven lines arrangement}
    \label{fig:SevenLines}
\end{figure}


\section{Construction of 12 lines with 19 triple points from Pappus's configuration.}\label{sec:main}
In this section we will consider a generic arrangement $\A$ of planes in $\R^3$ and its trace at infinity $\mathcal{A}_\infty$, generic arrangement of lines in projective plane. \\
We begin by considering a generic arrangement $\A$ of $6$ planes in $\R^3$ whose trace at infinity $\mathcal{A}_\infty = \{ l_{i_1},l_{i_2},...,l_{i_6} \}$ of 6 lines in $\PP^2\R$ is represented in Figure \ref{Figure:pappus's}(A). 
\begin{figure}[ht!] 
  \centering
  \subfloat
  {\includegraphics[width=0.45\textwidth]{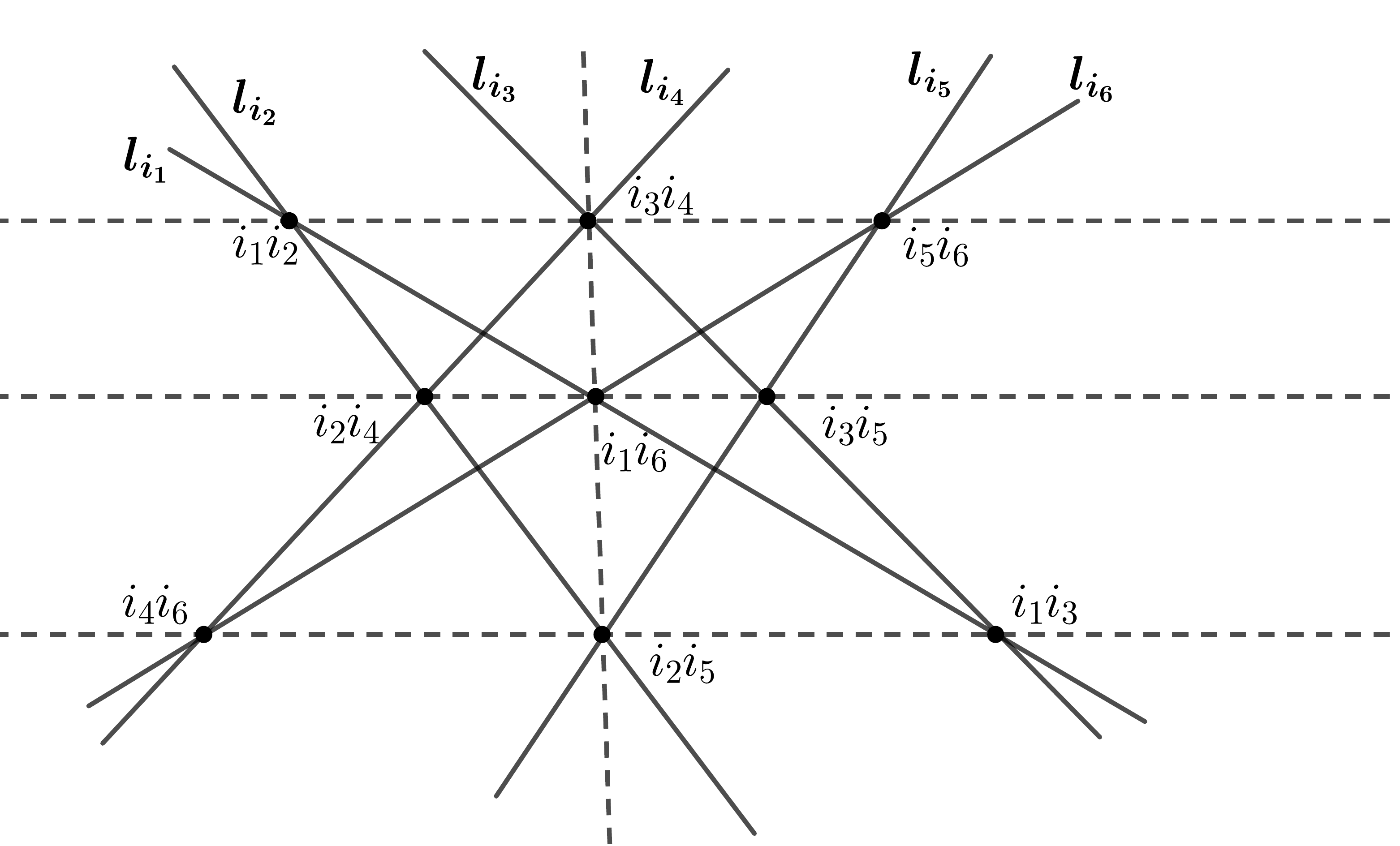}}
  \hfill
  \subfloat
  {\includegraphics[width=0.45\textwidth]{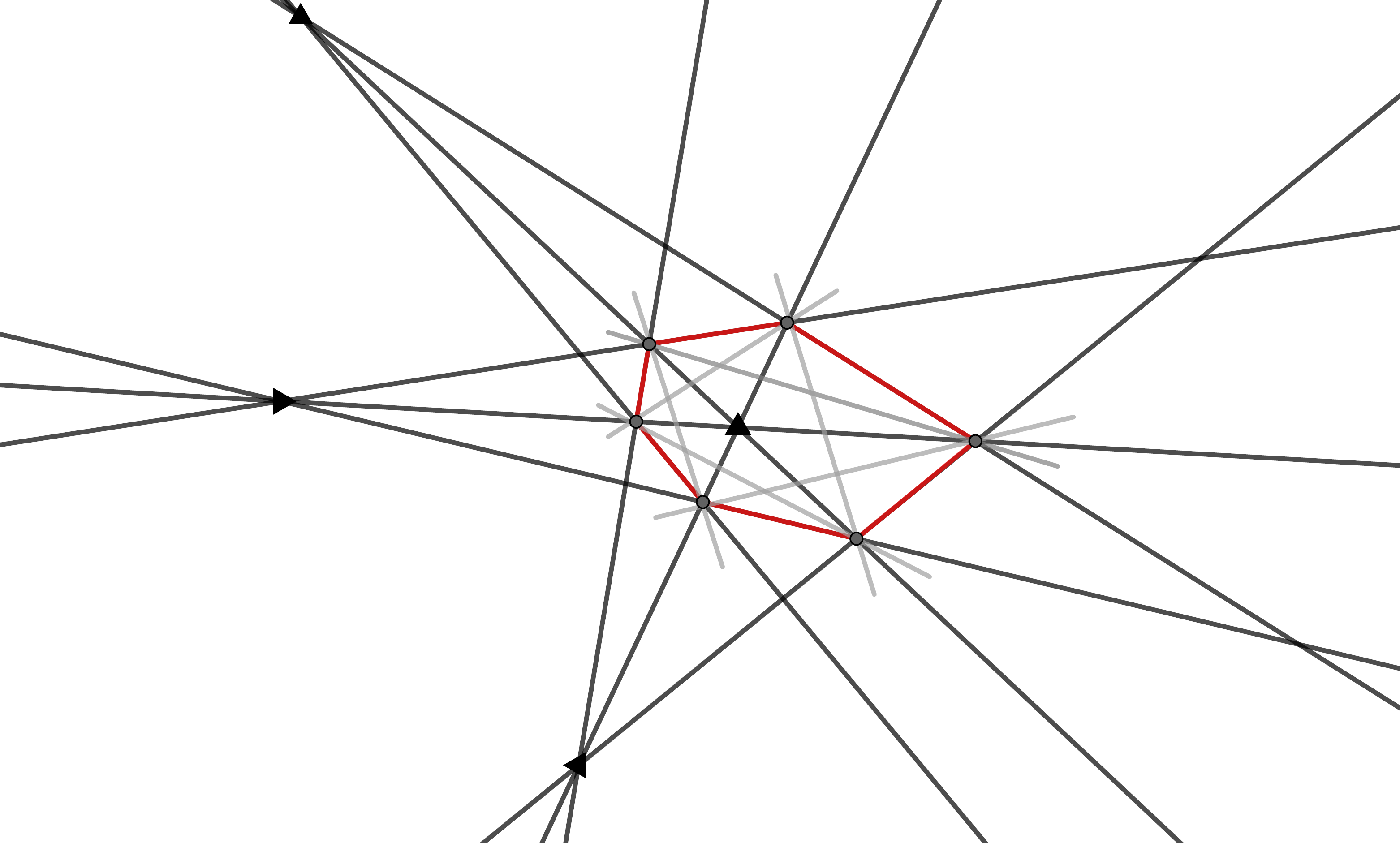}}
 \caption{ \small{(A)-left: Generic arrangement $\A_\infty$\label{Figure:pappus's} 
and (B)-right: the rank $2$ section of its discriminantal arrangement $\mathcal{B}(6,3,\A_\infty)$\label{Figure:Hexagon}}}
 \end{figure}
 \begin{figure}[ht!]
    \centering
    \includegraphics[scale=0.3]{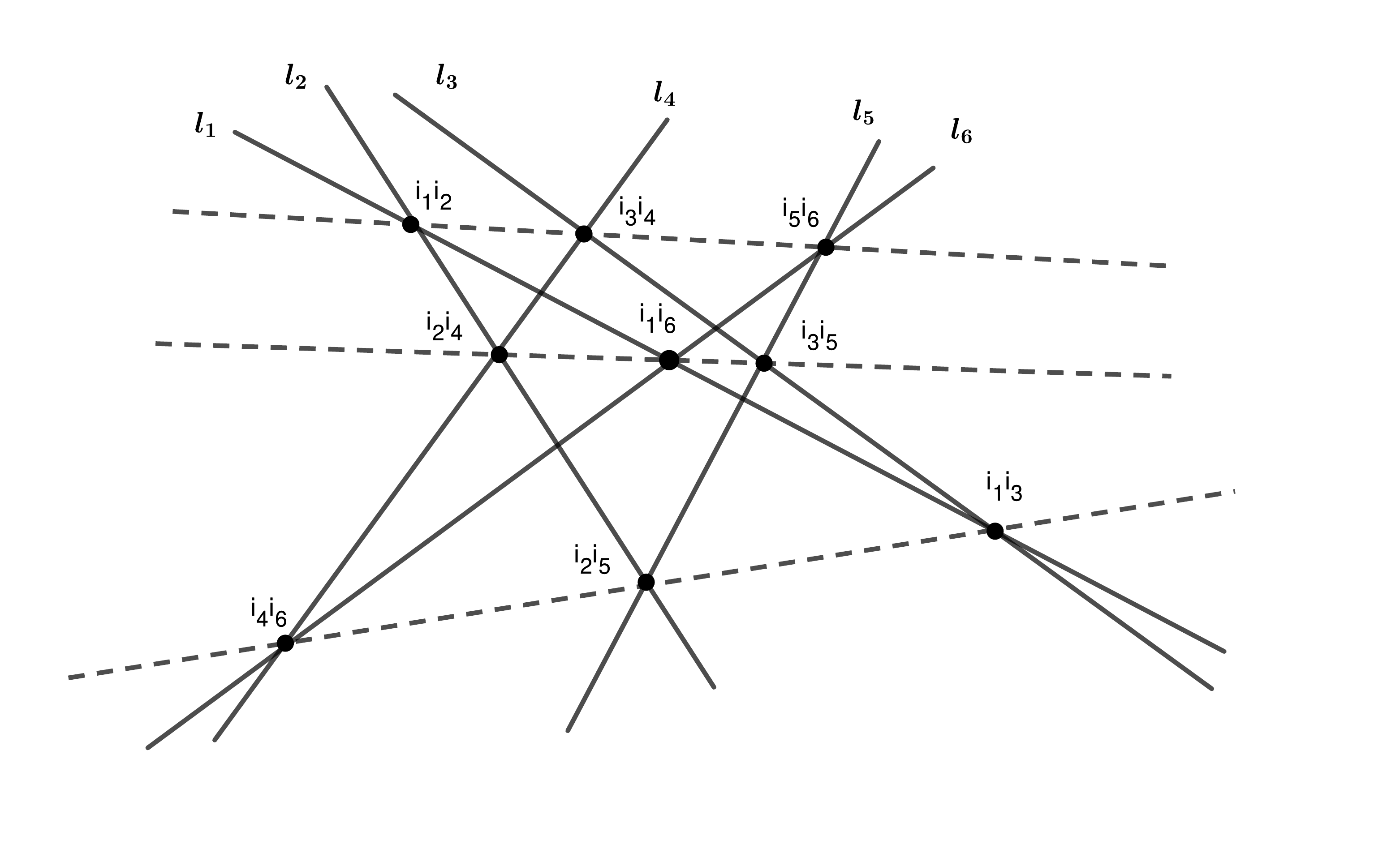}
    \caption{\small{Arrangement $\mathcal{P}_\infty$ corresponding to Pappus's configuration with 3 collinearity conditions.}}
    \label{fig:classical pappus}
\end{figure}
The lines in $\mathcal{A}_\infty$ intersect in $15$ double points satisfying $4$ collinearity conditions. For obvious reasons in the rest of this section we will refer to this configuration as Pappus's configuration with $4$ collinearities and denote it by $\mathcal{P}^c_\infty$, while $\mathcal{P}_\infty$ will denote the arrangement in Figure \ref{fig:classical pappus}, i.e. the Pappus's configuration with $3$ collinearities.\\
In \cite{sette} authors proved that collinearity conditions in $\mathcal{A_\infty}$ correspond to multiplicity 3 intersections in $\L_2(\B (n, 3, \mathcal{A_\infty}))$. In particular a generic planar section of $\mathcal{B}(6,3,\Cal P^c_\infty)$ is represented in Figure \ref{Figure:Hexagon}(B).\\
Our goal in this section is to show how the study of arrangements of projective lines with collinearities is connected to the study of arrangements of lines with maximum number of triple points and arrangements of lines with minimum number of double points. 
We will do this studying the case of $12$ lines in the plane builded along the following steps:
\begin{enumerate}
\item Consider the Pappus's configuration $\mathcal{P}^c_\infty$ (resp. $\mathcal{P}_\infty$ ) with $4$ (resp. $3$) collinearity conditions with the additional condition that the three lines corresponding to the three classical collinearities of Pappus's hexagon configuration are concurrent ( as depicted in Figure \ref{Figure:case6.4} where the three collinear lines are the parallel lines $l_1', l_3'$ and the line at infinity $l_2'$).
\item Add to this arrangement six more lines $\mathcal{P}'_\infty=\{l'_1,.....,l'_6\}$ as follows:
\begin{enumerate}
\item lines $l'_1, l'_2, l'_3$ are the three concurrent lines added in correspondence of the three Pappus's collinearities; 
\item lines $l'_4, l'_5, l'_6$ are added so that each one of them contains exactly two different double intersections of $\mathcal{P}^c_\infty$ (resp. $\mathcal{P}_\infty$ ) and that each double point is contained in only one line $l'_i, i=1,\ldots,6$.
\end{enumerate}
We call the so built arrangement $\mathcal{P}'_\infty$ a \textit{completion} of $\mathcal{P}^c_\infty$ (resp. $\mathcal{P}_\infty$ ).
\end{enumerate}
There are $3$ different completions $\mathcal{P}'_\infty$ of $\mathcal{P}^c_\infty$ and $\mathcal{P}_\infty$ . Indeed since lines in $\mathcal{P}^c_\infty$ ( resp. $\mathcal{P}_\infty$ ) intersect in $15$ double points, $9$ of which already belong to $l'_1, l'_2, l'_3$, the remaining $6$ double points\footnote{Notice that, by construction, any three of those $6$ points cannot be collinear, i.e. the construction in $(2)$(b) gives rise to three distinct lines. } can be partitioned in $3$ different ways as sets of couples.  
Two different arrangements $\mathcal{P}_\infty \cup \mathcal{P}'_\infty$ are depicted in  Figure \ref{Figure:case6.4} and Figure \ref{Figure:case6.3} while Figure \ref{Figure:case6.2} contains the arrangement $\mathcal{P}^c_\infty \cup \mathcal{P}'_\infty$ with the last possible alternative\footnote{In order to to simplify the figures we have chosen the line $l_2'$ to be the line at infinity.}.  With above notations, the following proposition holds.\\
\begin{prop}\label{prop:main} The arrangements of lines $\mathcal{P}_\infty\cup\mathcal{P}'_\infty$ are arrangements of $12$ lines in $\PP^2\R$ with maximum number of triple points and $\mathcal{P}^c_\infty\cup\mathcal{P}'_\infty$ is an arrangement of lines   with minimum number of double points.
\end{prop}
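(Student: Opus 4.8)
The statement has two parts---one about the triple points of $\mathcal{P}_\infty\cup\mathcal{P}'_\infty$ and one about the double points of $\mathcal{P}^c_\infty\cup\mathcal{P}'_\infty$---and each is proved by combining an explicit incidence count in the constructed arrangement with a known extremal bound for arrangements of $12$ lines. For the first part the plan is: (i) to show that each of the three completions $\mathcal{P}_\infty\cup\mathcal{P}'_\infty$ is an arrangement of $12$ lines in $\PP^2\R$ whose singular points are only double and triple points, and that it has exactly $19$ triple points; (ii) to recall that the classical orchard bound gives $t_3(12)\le\lfloor\frac{12(12-3)}{6}\rfloor+1=19$ (see \cite{Grum}, \cite{Green}), so that $19$ is both attained and maximal. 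For the second part: (iii) to count the singular points of $\mathcal{P}^c_\infty\cup\mathcal{P}'_\infty$ and show that its number of double points equals the known minimum number of ordinary points of a real $12$-line arrangement; (iv) to conclude by the dual Sylvester--Gallai-type lower bound for $12$ lines (cf. \cite{Syl}, \cite{Gal}, \cite{Green}). By Remark~\ref{rem 4.2} and Proposition~\ref{prop:equiv}, the first count also realizes $\mathcal{P}_\infty\cup\mathcal{P}'_\infty$ as a simple intersection of maximal multiplicity $19$ in $\L_{9}(\B(12,2,\A))$, which is the reformulation relating the statement to the discriminantal arrangement.

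The heart of the argument is the incidence count, which I would organize around the $\binom{12}{2}=66$ line-pairs, partitioned according to the three blocks $\{l_1,\dots,l_6\}$ (the Pappus lines), $\{l_1',l_2',l_3'\}$ (the three concurrent collinearity lines) and $\{l_4',l_5',l_6'\}$. From the construction one reads off at once that each of the $15$ pairwise intersection points of $l_1,\dots,l_6$ lies---by step $(2)(b)$, using that $l_1',l_2',l_3'$ each carry three of those $15$ points and thus absorb nine of them---on exactly one of $l_1',\dots,l_6'$, hence becomes a triple point; and $l_1'\cap l_2'\cap l_3'$ is one more triple point. This accounts for $16$ triple points; the remaining ones arise among the intersections within $\{l_4',l_5',l_6'\}$ and between the two blocks of new lines, and to pin down their number I would invoke the projective incidence relations built into a Pappus configuration, together with the concurrency hypothesis and the fact (recalled from \cite{Baker}) that quadrilateral sets occur in pairs, which force precisely the extra triples needed to reach $19$. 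The footnote to $(2)(b)$---that no three of the six free double points of the base configuration are collinear---then rules out any further coincidence, so there are exactly $19$ triple points for $\mathcal{P}_\infty\cup\mathcal{P}'_\infty$. For $\mathcal{P}^c_\infty\cup\mathcal{P}'_\infty$ the same bookkeeping applies except that the fourth collinearity of $\mathcal{P}^c_\infty$ forces one additional incidence, merging some of the singular points; reading this off yields the claimed (minimal) double-point count and explains why the two families are genuinely different. Since the three completions differ only in the $3$ possible pairings of the six free double points and are projectively equivalent, it suffices to carry out the count for one representative of each family.

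The main obstacle is exactly this fine incidence control: proving that the residual intersections among $l_1',\dots,l_6'$ and the original six lines contribute the missing triple points and nothing more, and, for $\mathcal{P}^c_\infty\cup\mathcal{P}'_\infty$, that the fourth collinearity lowers the double-point count all the way to the minimum. I would settle it in the way that is most robust and most consistent with the rest of this paper, namely by choosing explicit normal vectors for one representative of each of the three completions---as in the Examples of Sections~\ref{sec:motivex} and~\ref{sec:Orch}, and in accordance with Figures~\ref{Figure:case6.4}, \ref{Figure:case6.3} and \ref{Figure:case6.2}---and computing the multiplicity of every intersection point directly; the incidence-theorem discussion above then certifies that the outcome is independent of the chosen representative. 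A smaller, purely bibliographic point is to make sure the two extremal values cited in (ii) and (iv) are the established sharp ones: the equality $t_3(12)=19$ for the orchard problem, and the exact minimum number of ordinary points attainable by $12$ real lines in $\PP^2\R$.
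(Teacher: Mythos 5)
Your overall skeleton coincides with the paper's: cite the known extremal values ($19$ triple points, $6$ double points, from \cite{Grum} and \cite{Green}), then count incidences in the constructed arrangements; and your count of the first $16$ triple points ($15$ base double points each absorbed by exactly one $l'_i$, plus the concurrency point $l'_1\cap l'_2\cap l'_3$) agrees with the paper's. But the decisive step is exactly the one you leave unproved: showing that the three remaining intersections among the new lines, $P'_{45}$, $P'_{46}$, $P'_{56}$, are themselves triple points, i.e.\ that $P'_{45}\in l'_1$, $P'_{46}\in l'_3$ and $P'_{56}\in l'_2$. You defer this to ``projective incidence relations built into a Pappus configuration'' and to the pairing of quadrilateral sets from \cite{Baker}, neither of which, as stated, yields these specific incidences (the quadrilateral-set pairing concerns six lines with four triple points and is not invoked for this purpose in the paper). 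The paper closes this gap with a concrete geometric argument: taking $l'_2$ as the line at infinity, the quadrilateral cut out by $l'_5,l'_6,l_3,l_5$ has parallel sides $\overline{P_{23}P_{36}}$ and $\overline{P_{15}P_{45}}$ of equal length, hence is a parallelogram, so $l'_5\parallel l'_6$ and $P'_{56}\in l'_2$; the other two incidences follow by the symmetric choices of $l'_1$, resp.\ $l'_3$, as line at infinity (Figures \ref{Figure:case l1}--\ref{Figure:case l3}).

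Your fallback --- verifying the incidences by explicit normal vectors for ``one representative of each family'' --- does not repair this, because the proposition quantifies over all configurations produced by the construction, and your claim that a single representative suffices rests on a projective-equivalence assertion you never establish (the admissible Pappus configurations with the concurrency condition form a positive-dimensional family, so a one-off coordinate check proves only that one example works). The second half of the statement has the same problem in sharper form: the paper's proof identifies the key structural feature of $\mathcal{P}^c_\infty\cup\mathcal{P}'_\infty$, namely a single point of multiplicity $6$ together with $15$ triple points, and then obtains $t_2=6$ from the identity $\binom{12}{2}=\sum_{k\geq 2}t_k\binom{k}{2}$; your proposal never exhibits the multiplicity-$6$ point, saying only that the fourth collinearity ``forces one additional incidence, merging some of the singular points,'' which is not a proof of the count. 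So the proposal reproduces the paper's framework but omits precisely the incidence arguments that constitute its proof.
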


\begin{proof} It is well known that an arrangement of 12 lines in $\PP^2\R$ has at most $19$ triple points (see \cite{Grum} for details) or at minimum $6$ double points (see \cite{Green} for more details).  \\
The arrangement $\mathcal{P}^c_\infty\cup\mathcal{P}_\infty'$ represented in Figure \ref{Figure:case6.2} has exactly $6$ double intersections. Indeed an intersection of multiplicity 6 added to the 15 intersections of multiplicity $3$ leaves exactly $6$ intersections of multiplicity $2$ by the classical formula $$ \binom{s}{2}=\sum_{k\geq 2}t_k\binom{k}{2}
$$
where $s$ is number of mutually distinct lines and $t_k$ is the number of intersection points of multiplicity $k$ (see equation (1) in \cite{Dum}).\\
In the case depicted in Figure \ref{Figure:case6.4}, i.e.  $\mathcal{P}_\infty \cup \mathcal{P}_\infty'$, if we denote by $P_{ij}$ the double point given by the intersection $l_i \cap l_j$, the line $l'_5$ is chosen to be the line passing through double points $P_{23}$ and $P_{15}$, $l'_6$ the one passing through $P_{36}$ and $P_{45}$ and, finally, $l'_4$ is the one containing $P_{14}$ and $P_{26}$. 
This construction add $6$ triple points to the 10 ones obtained adding the lines $l'_1,l'_2$ and $l'_3$, for a total of 16 triple points. What is left to prove is that whenever we join lines in this way, if we denote by $P_{ij}
'=l'_i\cap l'_j$, the intersections $P_{56}', P_{45}'$ and $P_{46}'$ are triple points, that is, more precisely,  $P_{56}' \in l'_2, P_{45}' \in l'_1$ and $P_{46}' \in l'_3$. In order to prove this we remark that  the line $l'_5$ is parallel to $l'_6$ since, by simple geometric consideration, we have that the trapezium(or trapezoid) formed by $l'_5,l'_6,l_3,l_5$ has the parallel sides $\overline{P_{23}P_{36}}\text{ and }\overline{P_{15}P_{45}}$ of equal length making it a parallelogram. As a consequence $l'_2\cap l'_5\cap l'_6$ is not empty, that is $P_{56}' \in l'_2$. Same argument apply to the proof that $P_{45}' \in l'_1$ ( resp. $P_{46}' \in l'_3$ ) as soon as we consider the same configuration choosing the line $l'_1$ ( resp. $l'_3$ ) as the line at infinity as depicted in the left configuration of Figure \ref{Figure:case l1} (resp. right configuration of Figure \ref{Figure:case l3}). 
Same argument applies to the arrangement $\mathcal{P}_\infty \cup \mathcal{P}_\infty'$ depicted in Figure \ref{Figure:case6.3}.
\end{proof}
\begin{figure}[ht!]
    \centering
    \includegraphics[scale=0.3]{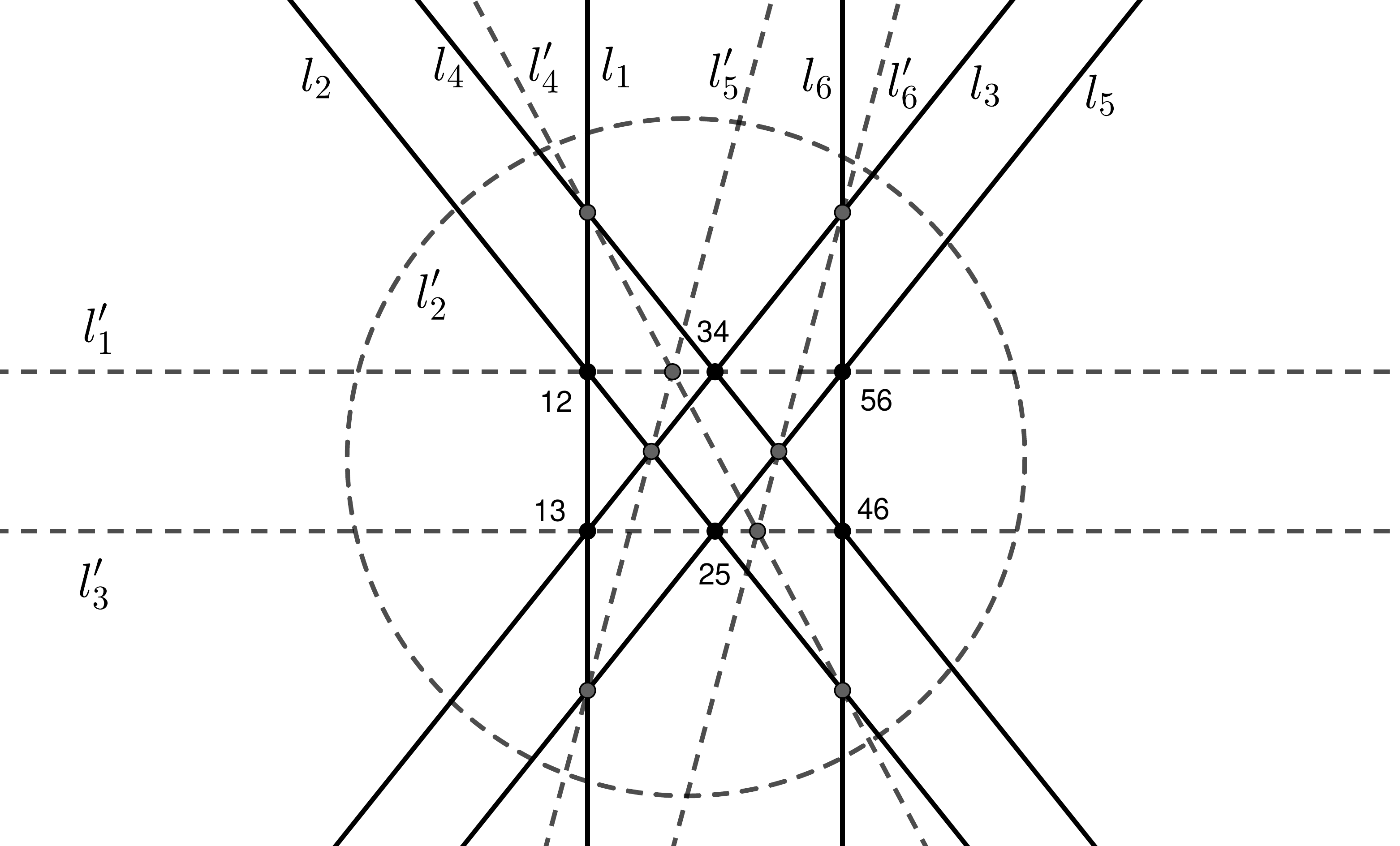}
    \caption{Arrangement  $\Cal P_\infty\cup\Cal P_\infty'$ with 19 3-points ($P_{ij}$ is written as $i j$). \label{Figure:case6.4}}
   
\end{figure}
\begin{figure}[ht!] 
  \centering
  \subfloat
  {\includegraphics[width=0.45\textwidth]{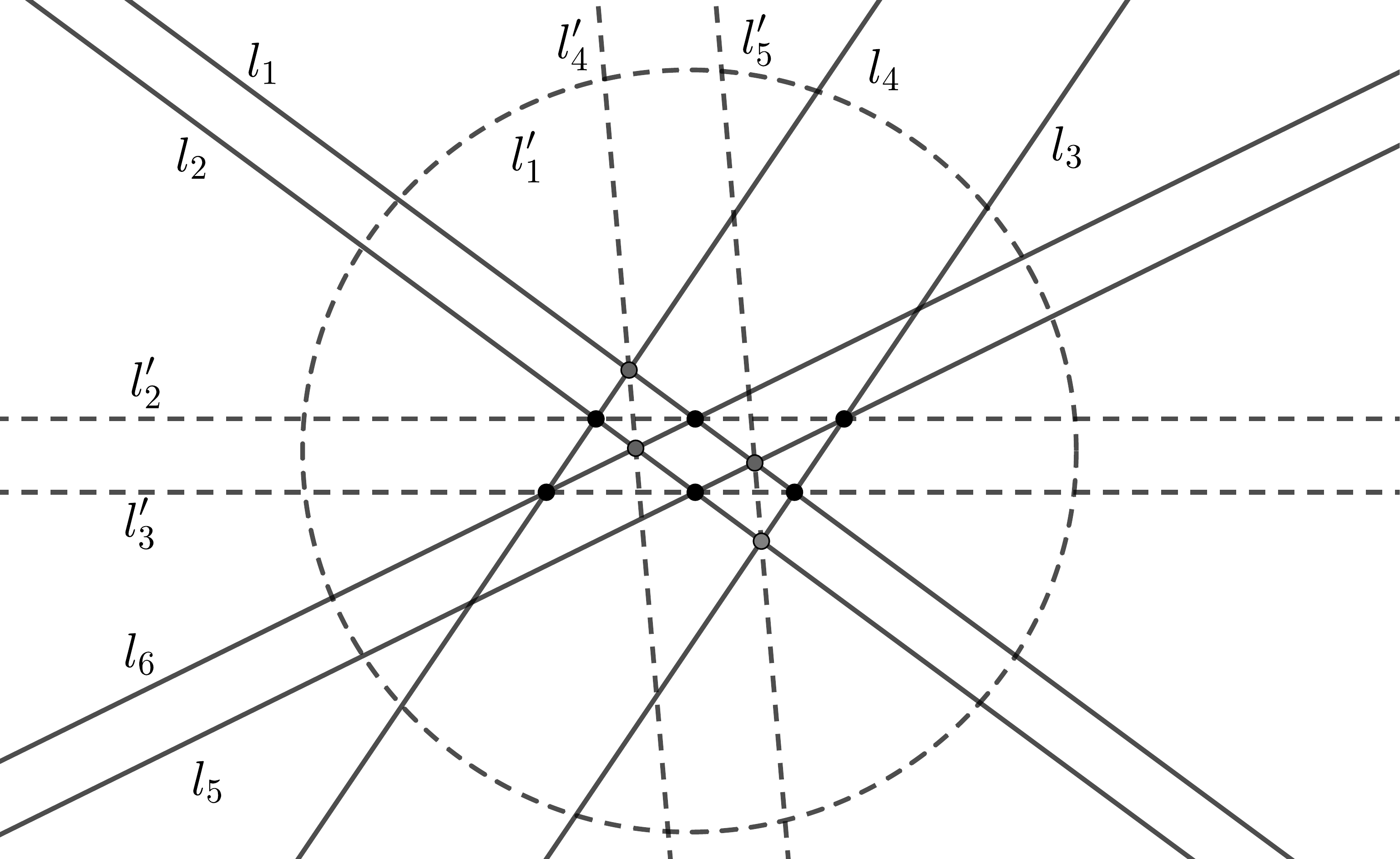}}
  \hfill
  \subfloat
  {\includegraphics[width=0.45\textwidth]{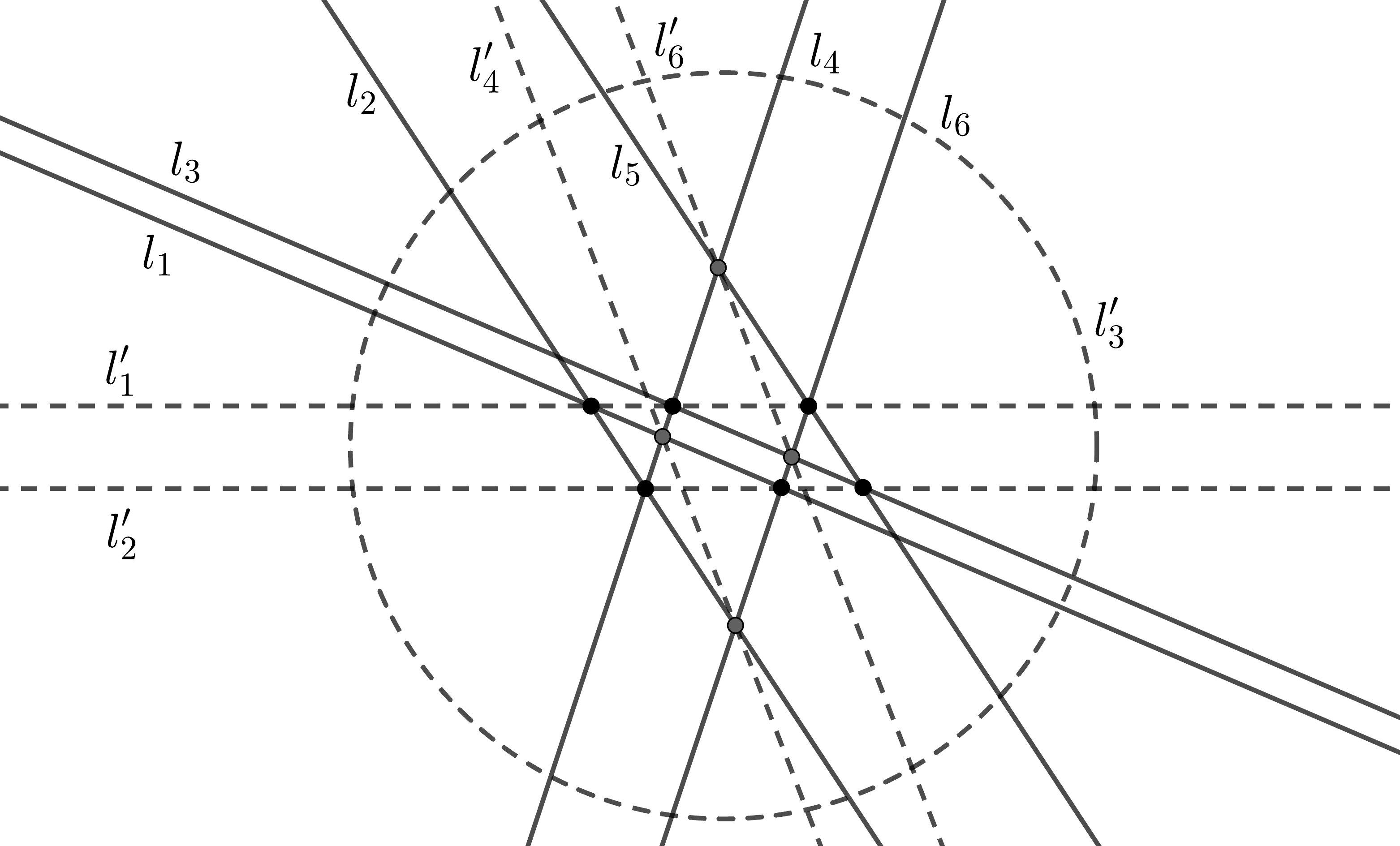}}
  \caption{on the left $\Cal P_\infty\cup\Cal P_\infty'$ with $l'_1$ as line at infinity \label{Figure:case l1}.On the right $l'_3$ is the line at infinity.\label{Figure:case l3}}.
  
\end{figure}
\begin{figure}[ht!] 
  \centering
  \includegraphics[scale=0.3]{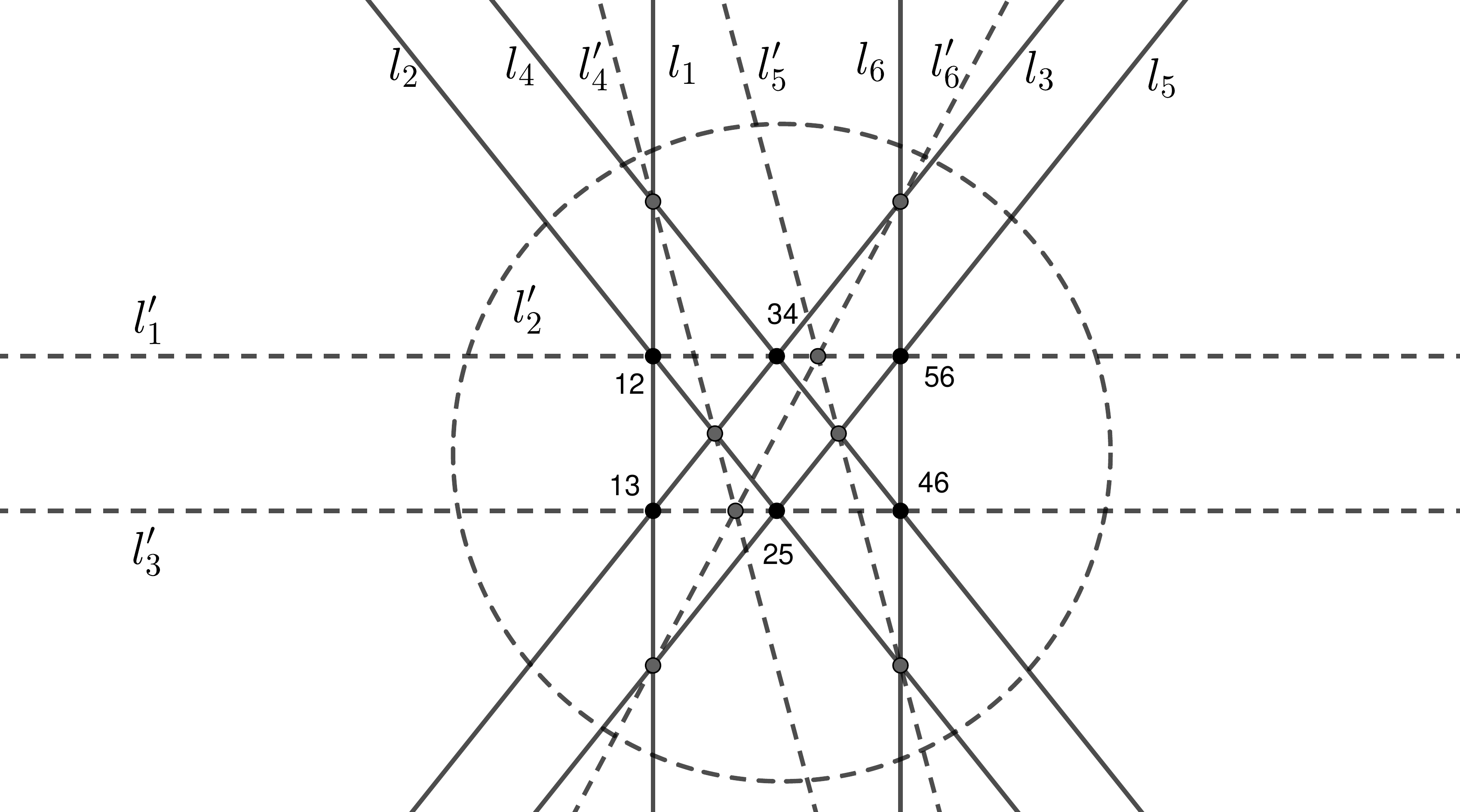}
  \caption{An arrangement  $\Cal P_\infty\cup\Cal P_\infty'$ with 19 3-points. $P_{ij}$ is written as $ij$. \label{Figure:case6.3}.}
\end{figure}

\begin{figure}[ht!] 
  \centering
  \includegraphics[scale=0.3]{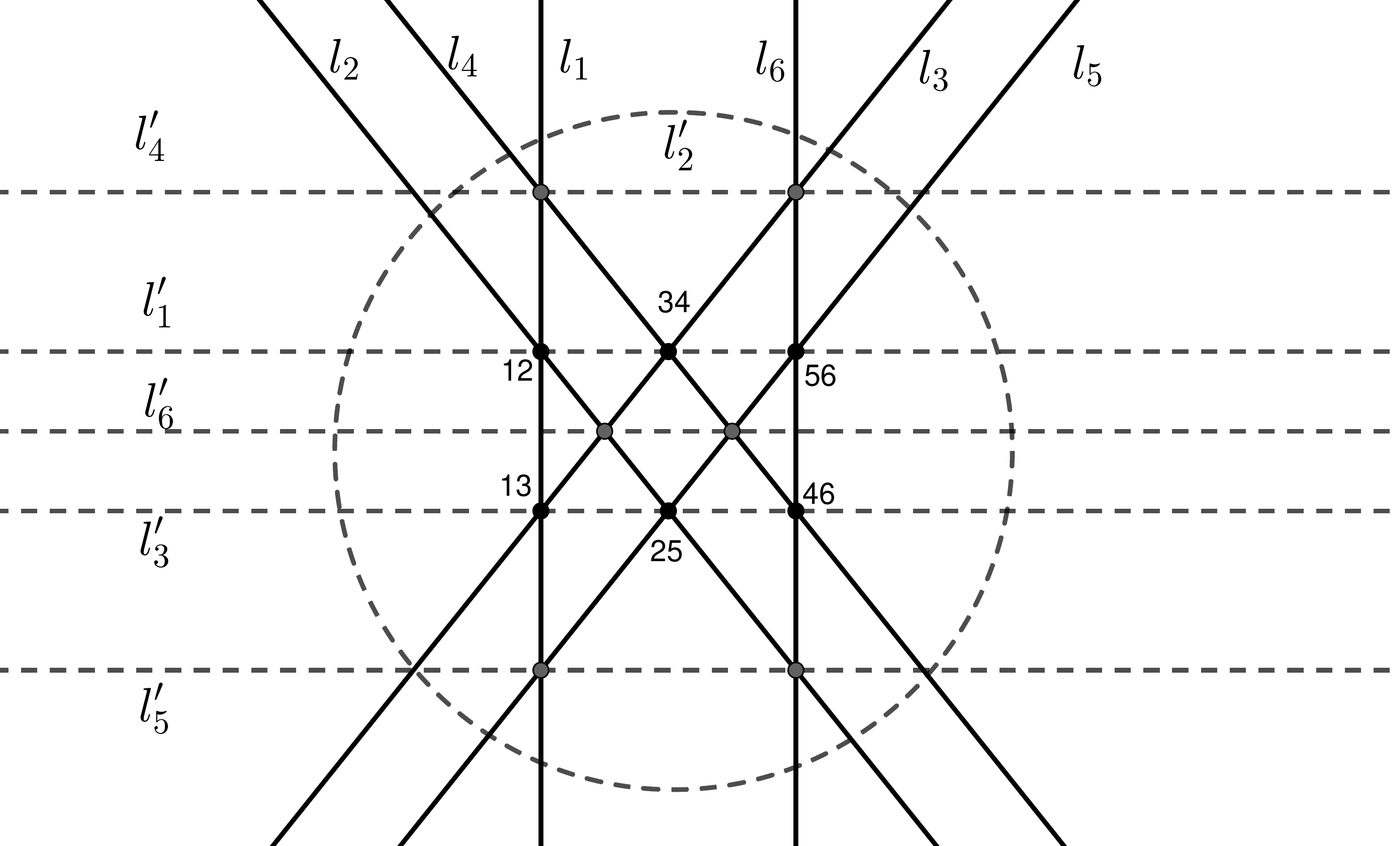}
  \caption{The arrangement  $\Cal P^c_\infty\cup\Cal P_\infty'$ with 6  2-points. $P_{ij}$ is written as $ij$. \label{Figure:case6.2}}
  
\end{figure}

\noindent
In the next subsection we will provide a way to build the completion $\mathcal{P}_\infty'$ that depends on the combinatorics of $\mathcal{B}(6,3,\Cal P^c_\infty)$ and $\mathcal{B}(6,3,\Cal P_\infty)$. This gives a purely combinatorial construction that can be generalized to a conjecture which we state in Subsection 5.3.

\subsection{Strong action on points $P\in\L_2(\mathcal{A}_\infty)$ and the $\sigma$-completion} Let's consider a generic arrangement $\A$ of $n$ planes in $\R^3$ with trace at infinity $\mathcal{A}_\infty = \{ l_1,l_2,...,l_n \}$ of n lines in $\PP^2\R$. Let $S \subseteq S_n$ be the subset of the symmetric group defined by
$$S=\{\sigma \in S_n \mid \sigma=\tau_1\tau_2....\tau_k, \quad \tau_i \mbox{ disjoint transpositions} \}  \quad .$$
The elements $\sigma\in S$ acts naturally on elements $P=l_m\cap l_n \in \L_2(\A_\infty)$ as $\sigma.P = l_{\sigma.m}\cap l_{\sigma.n}$.
We say that an element $\sigma \in S$ \textit{acts strongly} or that the action of $\sigma$ is \textit{strong} on $\A_\infty$ if it fixes non trivial collinearities in $\L_2(\A_\infty)$. That is if the points $\{P_1,P_2,\ldots,P_m\} , m\geq 3 $ in $\L_2(\mathcal{A}_\infty)$ are collinear and belonging to different lines, then $\sigma .P_i\in\{P_1,\ldots,P_m\}\text{ for any }i\in\{1,\dots,m\}.$ With the above notations, the following lemma holds.
\begin{lem}\label{lma5.1}
An element $\sigma \in S \subseteq S_6$ acts strongly on $\L_2(\mathcal{P}_\infty)$ if and only if $\sigma$ is a product of exactly three transpositions each one fixing a point in one of the three distinct collinearities. 
\end{lem}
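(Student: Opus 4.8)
The plan is to analyze the structure of the Pappus configuration $\mathcal{P}_\infty$ with its three collinearities and determine exactly which $\sigma \in S \subseteq S_6$ can fix all the collinearity relations. First I would set up notation: label the three classical Pappus collinearities (the three "diagonal" lines of the hexagon configuration) as $c_1, c_2, c_3$, each $c_j$ being a set of three collinear points $\{P_{j,1}, P_{j,2}, P_{j,3}\}$ in $\L_2(\mathcal{P}_\infty)$, where each such point is an intersection $l_a \cap l_b$ of two of the six lines. The key combinatorial fact to extract from the Pappus configuration is the incidence pattern: how the nine points in $c_1 \cup c_2 \cup c_3$ distribute among the $\binom{6}{2}=15$ possible pairs of lines, and which pairs of lines $\{a,b\}$ appear. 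Since $\sigma$ acts on points via $\sigma.(l_a \cap l_b) = l_{\sigma(a)}\cap l_{\sigma(b)}$, a strong action must permute the nine "collinearity points" among themselves, hence must permute the nine corresponding index-pairs among themselves.

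Next I would argue the forward direction: if $\sigma$ acts strongly, then $\sigma$ must setwise fix the union $c_1 \cup c_2 \cup c_3$ of collinearity points, and moreover must permute the three collinear triples $c_1, c_2, c_3$ among themselves (a collinear triple must go to a collinear triple, since the $P_i$ belong to different lines and $\sigma$ preserves the "different lines" condition). I would then use the specific incidence combinatorics of Pappus: the three triples $c_1,c_2,c_3$ together involve all six lines in a highly symmetric way, and the only permutations in $S$ (products of disjoint transpositions) preserving this structure are the products of exactly three transpositions that respect the "grid" structure of the hexagon. Concretely, in the standard labelling the nine points are indexed by pairs coming from a $3\times 3$ arrangement of the six lines split into two triples $\{1,2,3\}$ and $\{4,5,6\}$ (the two "sides" of the Pappus hexagon), and each collinearity consists of three points $l_i \cap l_{\pi(i)}$ for a bijection $\pi$ between the two triples; a strong $\sigma$ must either preserve or swap the two sides, and in each case the requirement of fixing all three collinearities forces $\sigma$ to be a product of three transpositions, with each transposition "responsible" for exactly one of the three collinearities in the sense described. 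I would verify the converse by direct check: taking such a product of three transpositions and confirming via the explicit incidences that each collinear triple is mapped onto a collinear triple.

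The main obstacle I expect is the bookkeeping in pinning down \emph{exactly} which products of three disjoint transpositions work — that is, proving both that any strong $\sigma$ must have this form (ruling out single transpositions, products of two transpositions, and the "wrong" products of three) and that every such $\sigma$ of the stated form genuinely is strong. This requires a careful, case-by-case examination of the Pappus incidence structure rather than a slick argument; the cleanest route is to fix a concrete coordinate model of $\mathcal{P}_\infty$ (or use the explicit combinatorial description of its $\L_2$ lattice), tabulate the nine index-pairs forming the three collinearities, and check which elements of $S_6$ permute this set of pairs while preserving the partition into three triples. The phrase "each one fixing a point in one of the three distinct collinearities" should be interpreted and verified against this table: each of the three transpositions $\tau_i$, when acting on the relevant collinear triple, must send that triple to itself while fixing one of its three points (the other two being swapped), and I would make this precise and check it holds in all the admissible cases while failing for the inadmissible ones.
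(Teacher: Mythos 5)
Your setup is sound: the action on index pairs, the observation that the nine collinearity points of $\mathcal{P}_\infty$ form a $3\times 3$ grid (a $K_{3,3}$ incidence between two triples of lines, with the three collinearities as three disjoint transversals), and the identification of what must be checked in both directions are all correct. But the proposal stops exactly where the proof has to start: the entire mathematical content of the lemma is the case analysis you defer as ``bookkeeping,'' and none of the decisive steps is carried out. For comparison, the paper's proof is a short structural argument: (i) since no collinearity of $\mathcal{P}_\infty$ contains two points lying on a common line, an element $\sigma$ fixing some line $l_{i_1}$ would send a moved point $P_{i_1j}$ to a different point $P_{i_1k}$ of the same line, which cannot belong to the same collinearity; hence a strong $\sigma$ fixes no line and must be a product of exactly three disjoint transpositions; (ii) $\sigma$ cannot fix all three points of one collinearity, since this forces a point of another collinearity to be sent into a third one; (iii) $\sigma$ cannot move all three points of a collinearity it preserves, because a product of disjoint transpositions restricted to a $3$-element set necessarily has a fixed point. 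Together these yield exactly one fixed point per collinearity. Your proposal contains no counterpart of (i)--(iii): the sentence ``the requirement of fixing all three collinearities forces $\sigma$ to be a product of three transpositions'' is precisely the claim to be proved, and the converse verification is likewise only promised, not performed.

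A further caution: your forward direction begins from the weaker statement that a strong $\sigma$ ``must permute the three collinear triples among themselves.'' The definition in the paper requires each collinearity to be mapped to itself, not merely that collinear triples go to collinear triples, and under the weaker hypothesis your forcing claim is false. For instance, with the collinearities $\mathcal{C}_1=\{P_{i_1i_2},P_{i_3i_4},P_{i_5i_6}\}$, $\mathcal{C}_2=\{P_{i_2i_4},P_{i_1i_6},P_{i_3i_5}\}$, $\mathcal{C}_3=\{P_{i_1i_3},P_{i_2i_5},P_{i_4i_6}\}$, the element $\sigma=\tau_{i_1i_4}\tau_{i_3i_6}$ permutes the nine collinearity points, swapping $\mathcal{C}_1$ with $\mathcal{C}_2$ and preserving $\mathcal{C}_3$, yet it is a product of only two transpositions and is not strong. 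So if you run the argument through the $K_{3,3}$ picture (each strong $\sigma$ preserves or swaps the two sides), you must impose from the outset that every $\mathcal{C}_i$ is fixed setwise; otherwise the case analysis does not close and the conclusion of the lemma would be contradicted.
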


\begin{proof}
Let the three classical collinearity conditions in the Pappus's configuration be $\mathcal{C}_1=\{P_{i_1i_2},P_{i_3i_4},P_{i_5i_6}\}$, $\mathcal{C}_2=\{P_{i_2i_4},P_{i_1i_6},P_{i_3i_5}\}$ and $\mathcal{C}_3=\{P_{i_1i_3},P_{i_2i_5},P_{i_4i_6}\}$ as depicted in Figure $\ref{fig:classical pappus}$.\\
Assume $\sigma$ fixes a line of $\mathcal{P}_\infty$. Without loss of generalities, we can assume that $\sigma$ fixes $l_{1_1}$. Then for any point $P_{i_1j}\in l_{i_1}$, we have $\sigma.P_{i_1j}=P_{i_1k}\in l_{i_1}$, for some $k\ne j$. As a consequence, $\sigma$ cannot act strongly on all non trivial collinearities as no collinearity contains two points in the same line in $\mathcal{P_\infty}$. It follows that if  $\sigma$ is strong then $\sigma$ cannot fix any line, i.e. it is a product of exactly three disjoint transpositions.\\
Moreover, if  $\sigma$ fixes all the three points in a non trivial collinearity then it is easy to check that $\sigma$ sends at least one point of one collinearity to a point in another that is $\sigma$ does not act strongly on $\L_2(\mathcal{P}_\infty)$.\\
Analogously, it is not difficult to check that if $\sigma $ permutes all the three points in a non trivial collinearity then $\sigma$ will be a product of non-disjoint transpositions. So $\sigma$ has to be the product of three disjoint transpositions that fixes exactly one point in each collinearity.
\end{proof}
\noindent
Let $\A$ be a generic arrangement of n planes in $\R^3(\C^3)$ with trace at infinity $\A_\infty$ and $\sigma\in S_n$ an element that acts strongly on $\A_\infty$. We call \textit{$\sigma$-completion} of $\A_\infty$, and denote it by $\A_\infty^\sigma$, the arrangement $\A_\infty^\sigma=\{l_1',\ldots,l_m'\}$ satisfying the following conditions:
\begin{itemize}
    \item $l_i'$ is the line $P\sigma.P$ where $P\in\L_2(\A_\infty)$.
    \item For any point $P\in\L_2(\A_\infty)$ there exists exactly one line $l_i'\in\A_\infty^\sigma$ such that $P\in l_i'$.
\end{itemize}
The Lemma \ref{lma5.1} implies that we have 3 different $\sigma$-completions $\Cal P_{\infty}^\sigma$ of $\mathcal{P}_\infty$.
As a consequence, for any given permutation $(i_1,i_2,\ldots,i_6),$  of the indices $\{1,\ldots, 6\}$, the set of $\sigma$ acting strongly on $\mathcal{P}_\infty$ is $$S_{\mathcal{P}_\infty}=\{\sigma_1=\tau_{i_1i_2}\tau_{i_3i_5}\tau_{i_4i_6},\sigma_2=\tau_{i_1i_3}\tau_{i_2i_4}\tau_{i_5i_6},\sigma_3=\tau_{i_1i_6}\tau_{i_2i_5}\tau_{i_3i_4}\} \quad .$$ 
The collinearities are $\mathcal{C}_1=\{P_{i_1i_2},P_{i_3i_4},P_{i_5i_6}\}$, $\mathcal{C}_2=\{P_{i_2i_4},P_{i_1i_6},P_{i_3i_5}\}$ and $\mathcal{C}_3=\{P_{i_1i_3},P_{i_2i_5},P_{i_4i_6}\}$ as depicted in Figure \ref{fig:classical pappus}. Remark that $\sigma_3$ is the only element which acts strongly on $\mathcal{P}^c_\infty$ too and the following proposition, equivalent to  Proposition \ref{prop:main} holds.\\
\begin{prop}\label{thm5.1}
If $\mathcal{P}_\infty$ and $\mathcal{P}^c_\infty$ satisfy the additional condition that three collinearities of the classical Pappus's configuration are concurrent then for $\sigma \in S_{\mathcal{P}_\infty}$ we have that
\begin{enumerate}
\item $\Cal P^c_\infty \cup\Cal P_\infty^\sigma$ is an arrangement with minimum number of 2-points if and only if $\sigma$ is the element that acts strongly on $\Cal P^c_\infty$,
\item $\Cal P_\infty \cup\Cal P_\infty^\sigma$ is an arrangement with maximum number of 3-points if $\sigma$ doesn't act strongly on $\Cal P^c_\infty$.
\end{enumerate}
\end{prop}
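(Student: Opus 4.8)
The plan is to reduce Proposition \ref{thm5.1} to the already-proven Proposition \ref{prop:main} by checking that the two constructions ``add six lines so that each of the six residual double points of $\mathcal{P}_\infty$ (resp. $\mathcal{P}^c_\infty$) lies on exactly one new line'' coincide combinatorially. First I would unwind the definition of $\sigma$-completion for $\sigma \in S_{\mathcal{P}_\infty}$ using Lemma \ref{lma5.1}: such a $\sigma$ is a product of three disjoint transpositions, each fixing exactly one point of each of the three classical collinearities $\mathcal{C}_1,\mathcal{C}_2,\mathcal{C}_3$. Tracking which double points $P_{mn}$ are glued by $P \mapsto \sigma.P$, one sees that the three fixed points (one per collinearity) produce the three lines $l'_1,l'_2,l'_3$ carrying the three Pappus collinearities, while the remaining twelve double points are paired off into six pairs, giving three further lines $l'_4,l'_5,l'_6$; moreover distinct $\sigma \in S_{\mathcal{P}_\infty}$ give the three distinct pairings of the six residual points. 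Hence $\mathcal{P}_\infty^\sigma$ ranges exactly over the three completions $\mathcal{P}'_\infty$ of $\mathcal{P}_\infty$ described in Section \ref{sec:main}, and likewise $\mathcal{P}^c_\infty\cup\mathcal{P}_\infty^{\sigma_3}$ is the completion $\mathcal{P}^c_\infty\cup\mathcal{P}'_\infty$ of Figure \ref{Figure:case6.2}.

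Next I would argue the ``if and only if'' in part (1). The key point is that a $\sigma$-completion of $\mathcal{P}^c_\infty$ is only defined when $\sigma$ acts strongly on $\mathcal{P}^c_\infty$, and by the remark preceding the proposition $\sigma_3$ is the unique such element in $S_{\mathcal{P}_\infty}$ (the extra collinearity of $\mathcal{P}^c_\infty$ must also be preserved, which kills $\sigma_1,\sigma_2$). So among the three completions, only the one coming from $\sigma_3$ respects the extra collinearity, and this is precisely the arrangement of Figure \ref{Figure:case6.2} which Proposition \ref{prop:main} shows has the minimum number $6$ of double points; the other two completions of $\mathcal{P}^c_\infty$ (the ones using $\sigma_1$ or $\sigma_2$) destroy the fourth collinearity and, by the counting formula $\binom{s}{2}=\sum_{k\ge 2}t_k\binom{k}{2}$ used in the proof of Proposition \ref{prop:main}, cannot reach $t_2 = 6$. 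For part (2), if $\sigma\in S_{\mathcal{P}_\infty}$ does not act strongly on $\mathcal{P}^c_\infty$, i.e. $\sigma\in\{\sigma_1,\sigma_2\}$, then $\mathcal{P}_\infty\cup\mathcal{P}_\infty^\sigma$ is one of the two completions $\mathcal{P}_\infty\cup\mathcal{P}'_\infty$ depicted in Figures \ref{Figure:case6.4} and \ref{Figure:case6.3}, and Proposition \ref{prop:main} already states these realize the maximum $19$ triple points. Invoking Proposition \ref{prop:main} in both directions then finishes the proof.

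I expect the main obstacle to be the bookkeeping in the first step: one must verify carefully, using the explicit incidence structure of the Pappus configuration in Figure \ref{fig:classical pappus}, that (a) each $\sigma_j$ indeed fixes exactly one point per collinearity so that $l'_1,l'_2,l'_3$ really are the three collinearity lines, (b) the induced pairings on the six residual points are the three distinct ones (so no two residual points are ever forced to be collinear, matching the footnote in Section \ref{sec:main}), and (c) the concurrency hypothesis on the three Pappus collinearities translates correctly under $\sigma$ so that the $\sigma$-completion really is the ``completion with the concurrency condition'' of Section \ref{sec:main}. Once this dictionary between $\sigma$-completions and completions is established, parts (1) and (2) are immediate restatements of Proposition \ref{prop:main}, so I would keep the proof short by citing Proposition \ref{prop:main} rather than re-deriving the triple/double point counts.
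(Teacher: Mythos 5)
Your reduction of the statement to Proposition \ref{prop:main}, via the dictionary identifying the $\sigma$-completions $\mathcal{P}_\infty^\sigma$ for $\sigma\in S_{\mathcal{P}_\infty}$ with the three completions $\mathcal{P}'_\infty$ of Section \ref{sec:main} and using that $\sigma_3$ is the unique element acting strongly on $\mathcal{P}^c_\infty$, is essentially the paper's own treatment: the paper gives no separate argument and simply presents the proposition as ``equivalent to Proposition \ref{prop:main}''. Your added sketch of the ``only if'' in part (1) (that $\sigma_1,\sigma_2$ cannot attain $t_2=6$) actually goes beyond what the paper records, though there it is asserted via the counting formula rather than fully verified.
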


\subsection{Strong action on $\B(n,3,\A_\infty)$ and the line arrangement $\A^\sigma$}

The results in Subsection 5.1 can be easily expressed in terms of the combinatorics of the discriminantal arrangement $\B(n,k,\A_\infty)$. The symmetric group $S_n$ acts naturally on the hyperplanes $D_L\in \B(n,k,\A_\infty), L=\{s_1,\ldots,s_{k+1}\}\subset [n]$ as $$\sigma .D_L=D_{\sigma.L}\in\B(n,k,\A_\infty), \sigma.L=\{\sigma.s_1,\ldots,\sigma.s_{k+1}\} \quad . $$ Moreover since each collinearity condition in $\A_{\infty}$ corresponds to a point $P$ of multiplicity 3 in $\L_2(\B(n,3,\A_\infty))$ (see \cite{sette}), then $\sigma$ acts strongly on $\A_{\infty}$ if and only if it fixes the corresponding intersection $P\in \L_2(\B(n,3,\A_\infty))$\footnote{In details the collinearity condition $\mathcal{C}=\{P_{i_1i_2},P_{i_3i_4},P_{i_5i_6}\}$ in $\A_\infty$ corresponds to the 3-point $P=\cap_{i=1}^3D_{L_i}\in\L_2(\B(6,3,\A_\infty))$ with $L_1=\{i_3,i_4,i_5,i_6\},L_2=\{i_1,i_2,i_5,i_6\},L_3=\{i_1,i_2,i_3,i_4\}$.}. Accordingly we will say that $\sigma$ acts strongly on $\B(n,3,\A_\infty)$ if it fixes all triple intersections in $\L_2(\B(n,3,\A_\infty))$. Notice that this definition too has meaning only when the arrangement $\A$  is non very generic.\\
In order to rewrite Proposition \ref{thm5.1} only in terms of the combinatorics of the discriminantal arrangement we need two more steps. First we give the following definition.
\begin{defi}
Two intersections $P,P'\in\L_2(\B(n,3,\A_\infty))$ are \textit{independent} if and only if any hyperplane $D_L\in\B(n,3,\A_\infty)$ which contains $P$ does not contain the point $P'$. \\Given three pairwise independent intersections  $P_i\in\L_2(\B(n,3,\A_\infty)) ,i=1,2,3$ the intersection  $P\in\L_2(\B(n,3,\A_\infty))$ is \textit{purely dependent from $P_i$'s} if it is the intersection of three hyperplanes each one containing exactly one $P_i,i=1,2,3$ (as in Figure \ref{Pappushexagon}).
\end{defi}
\begin{figure}[ht!]
    \centering
    \includegraphics[scale=0.3]{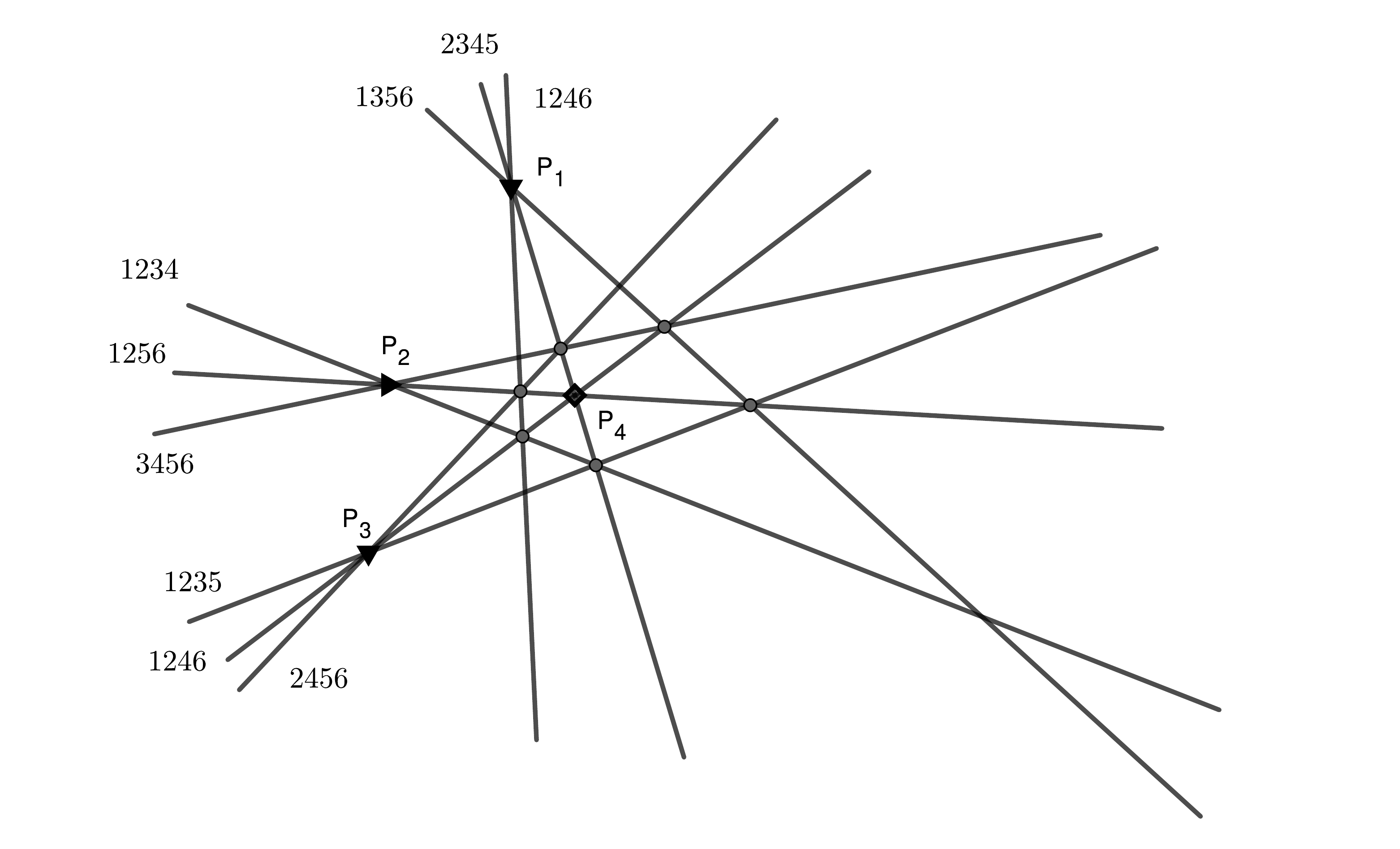}
    \caption{Discriminantal arrangement corresponding to the Pappus's configuration with 4 multiplicity 3 points}
    \label{Pappushexagon}
\end{figure}
\noindent
Remark that purely dependent intersections only exist when $\A$ is non very generic arrangement. Furthermore it is a simple remark that $\B(6,3,\mathcal{A}_\infty)$ contains at most $3$ independent intersections, realized when $\A_\infty=\Cal P_\infty$, and a purely dependent one as in $\Cal P^c_\infty$.   \\
Secondly let's denote by $\A^\sigma$ the arrangement in $\R^2(\C^2)$ whose projective closure is the $\sigma$-completion $\A_\infty^\sigma$ of $\A_\infty$. It is a simple remark that the $\sigma$ completion $\Cal P_\infty^\sigma$ at point $(2)$ of Proposition \ref{thm5.1} is an arrangement of $6$ lines with 4 triple points, that is the maximum possible ( see Figure \ref{Figure:case6.4}). Hence, for a suitable choice of the line at infinity, $\Cal P^\sigma$ is an arrangement in a simple non very generic intersection $X\in\L_3(\B(6,2,\A))$ of maximum multiplicity and the following theorem equivalent to Proposition  \ref{thm5.1} holds.\\
\begin{thm}\label{thm:main}
Let $\B(6,3,\mathcal{A}_\infty)$ be a discriminantal arrangement with maximum number of independent intersections in $\L_2(\B(6,3,\A_\infty))$ and $\sigma\in S$ an element that acts strongly on $\B(6,3,\mathcal{A}_\infty)$, then 
\begin{enumerate} 
\item The arrangement $\A_\infty\cup\A_\infty^\sigma$ is an arrangement with minimum number of intersections of multiplicity 2 if and only if $\L_2(\B(6,3,\A_\infty))$ contains a purely dependent intersection fixed by $\sigma$ and $\A_\infty^\sigma$ is central.
\item $\A_\infty\cup\A_\infty^\sigma$ is an arrangement with maximum number of intersections of multiplicity 3 if and only if $\A^\sigma \in X\in\L_3(B(6,2,\A^\sigma))$ simple non very generic intersection of multiplicity $4$ .
\end{enumerate}
 \end{thm}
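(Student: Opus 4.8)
The plan is to derive the theorem from Proposition \ref{thm5.1} by verifying that, under the standing hypotheses, each of its two statements is a verbatim translation of the corresponding statement of that proposition into the language of the discriminantal arrangements $\B(6,3,\A_\infty)$ and $\B(6,2,\A^\sigma)$.

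First I would unwind the hypotheses. By the remark preceding the statement, $\B(6,3,\A_\infty)$ attains the maximal number (three) of independent intersections in $\L_2$ exactly when $\A_\infty$ realizes three non-trivial collinearities among its double points, lying on distinct lines; that is, exactly when $\A_\infty$ is the Pappus configuration $\mathcal{P}_\infty$, possibly together with its fourth, purely dependent collinearity, so that $\A_\infty\in\{\mathcal{P}_\infty,\mathcal{P}^c_\infty\}$. Since, by \cite{sette}, each collinearity of $\A_\infty$ corresponds to a multiplicity $3$ point of $\L_2(\B(6,3,\A_\infty))$, the element $\sigma$ acts strongly on $\B(6,3,\A_\infty)$ if and only if it acts strongly on $\A_\infty$, and by Lemma \ref{lma5.1} this forces $\sigma\in S_{\mathcal{P}_\infty}$, a product of three disjoint transpositions each fixing one point in one of the three classical collinearities. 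I would then record that $\A_\infty^\sigma$ is precisely a completion $\mathcal{P}'_\infty$ in the sense of Section \ref{sec:main}: the three lines carrying the classical collinearities appear as the $\sigma$-completion lines joining the two points of each collinearity that $\sigma$ interchanges, while the remaining six double points of $\A_\infty$ are partitioned by $\sigma$ into the three further lines $l'_4,l'_5,l'_6$. Hence $\A_\infty\cup\A_\infty^\sigma$ is one of the $12$-line arrangements $\mathcal{P}_\infty\cup\mathcal{P}'_\infty$ or $\mathcal{P}^c_\infty\cup\mathcal{P}'_\infty$ treated in Proposition \ref{prop:main}.

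For statement $(1)$ I would argue that, by the remark above and the discussion after Lemma \ref{lma5.1}, $\L_2(\B(6,3,\A_\infty))$ contains a purely dependent intersection fixed by $\sigma$ if and only if $\A_\infty=\mathcal{P}^c_\infty$ and $\sigma=\sigma_3$, the unique element acting strongly on $\mathcal{P}^c_\infty$. Next, the condition that $\A_\infty^\sigma$ be central, i.e. that the six lines of $\mathcal{P}_\infty^\sigma$ be concurrent, is equivalent -- given the special position of $\mathcal{P}^c_\infty$ and the choice $\sigma=\sigma_3$ -- to the concurrency of the three lines carrying the classical collinearities, which is the additional hypothesis of Proposition \ref{thm5.1}; and that concurrency is exactly what produces the multiplicity $6$ point of $\mathcal{P}^c_\infty\cup\mathcal{P}_\infty^\sigma$ in the proof of Proposition \ref{prop:main}. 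So the hypotheses of part $(1)$ are equivalent to ``the three classical collinearities are concurrent and $\sigma$ acts strongly on $\mathcal{P}^c_\infty$'', while the conclusion -- minimum number of double points -- is literally that of Proposition \ref{thm5.1}$(1)$; the asserted biconditional follows.

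For statement $(2)$ I would first recall, from Section \ref{sec:motivex} and from Proposition \ref{prop:equiv} via Remark \ref{rem 4.2}, that for a suitable choice of the line at infinity the affine arrangement $\A^\sigma$ lies in a simple non very generic intersection $X\in\L_3(\B(6,2,\A^\sigma))$ of multiplicity $4$ if and only if $\A^\sigma$ is a quadrilateral set, i.e. an arrangement of six lines with exactly four triple points. Then I would check, through the incidence count in the proof of Proposition \ref{prop:main} (the parallelogram argument producing $P_{45}'\in l'_1$, $P_{56}'\in l'_2$, $P_{46}'\in l'_3$), that $\mathcal{P}_\infty^\sigma$ is a quadrilateral set precisely when the three classical collinearities are concurrent; hence ``$\A^\sigma\in X\in\L_3(\B(6,2,\A^\sigma))$ of multiplicity $4$'' is equivalent to that concurrency. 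The same count shows that, under it, $\A_\infty\cup\A_\infty^\sigma=\mathcal{P}_\infty\cup\mathcal{P}_\infty^\sigma$ has the maximal number $19$ of triple points attainable by $12$ lines (cf. \cite{Grum}), and that conversely attaining $19$ triple points forces the four triple points to occur already inside $\A_\infty^\sigma$; so statement $(2)$ amounts to Proposition \ref{thm5.1}$(2)$ together with its converse. The step I expect to be the main obstacle is precisely this bookkeeping: making the dictionary between ``purely dependent intersection fixed by $\sigma$'', ``$\A_\infty^\sigma$ central'', ``$\A^\sigma$ a quadrilateral set'' and the geometric conditions of Proposition \ref{thm5.1} tight in both directions, and in particular ruling out any point of multiplicity larger than $3$ in $\A_\infty\cup\A_\infty^\sigma$ when the number of triple points is maximal.
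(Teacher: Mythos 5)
Your proposal is correct and follows essentially the same route as the paper: the paper gives no separate proof of Theorem \ref{thm:main}, but obtains it exactly as you do, by the dictionary of Subsections 5.1--5.2 (collinearities $\leftrightarrow$ multiplicity $3$ points of $\L_2(\B(6,3,\A_\infty))$, strong action on $\A_\infty$ $\leftrightarrow$ strong action on $\B(6,3,\A_\infty)$ via Lemma \ref{lma5.1}, $\sigma$-completion $\leftrightarrow$ completion $\mathcal{P}'_\infty$, quadrilateral set $\leftrightarrow$ simple non very generic intersection of multiplicity $4$ in $\L_3(\B(6,2,\A^\sigma))$), reducing everything to Proposition \ref{thm5.1} and the geometric proof of Proposition \ref{prop:main}. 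If anything, your write-up is more explicit than the paper about the converse directions of the stated equivalences.
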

\noindent
The above statement only depends on the combinatorics of discriminantal arrangement allowing us to generalize it into the following conjecture. 

\begin{con}\label{con}
 Let $\B(n,3,\mathcal{A}_\infty)$ be a discrimanantal arrangement with maximum number of independent intersections in $\L_2(\B(n,3,\A_\infty))$ and $\sigma\in S$ an element which acts strongly on $\B(n,3,\A_\infty)$, then 
 \begin{enumerate}
 \item the arrangement $\A_\infty\cup\A_\infty^\sigma$ is an arrangement with minimum number of intersections of multiplicity 2 if and only if $\L_2(\B(n,3,\A_\infty))$ also contains the maximum number of purely dependent intersections fixed by $\sigma$ and $\A_\infty^\sigma$ is central;
\item $\A_\infty\cup\A_\infty^\sigma$ is an arrangement with maximum number of intersections of multiplicity 3 if and only if $\A^\sigma \in X\in\L_{n-3}(B(n,2,\A^\sigma))$ is simple non very generic intersection of maximum multiplicity.
\end{enumerate}
\end{con}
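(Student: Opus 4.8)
The plan is to prove Conjecture \ref{con} by promoting the dictionary already assembled in Sections \ref{sec:Orch} and \ref{sec:main} from $n=6$ to arbitrary $n$, and then reducing each of the two equivalences to an extremal statement about the sub-arrangement $\A^\sigma$. The three basic correspondences to carry over are: (i) the collinearity conditions in $\A_\infty$ are exactly the multiplicity-$3$ points of $\L_2(\B(n,3,\A_\infty))$ (from \cite{sette}), so that the strong action of $\sigma$ is precisely the statement that $\sigma$ permutes this finite set of points among themselves; (ii) by Remark \ref{rem 4.2} and Proposition \ref{prop:equiv}, the triple points of a translate of $\A^\sigma$ are in bijection with the multiplicity of a simple intersection $X\in\L_{n-3}(\B(n,2,\A^\sigma))$, maximality of one matching maximality of the other; and (iii) the $\sigma$-completion is a well-defined object. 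The first genuine step is therefore to show that for general $n$ the completion $\A_\infty^\sigma$ consists of exactly $n$ lines and that $\A^\sigma$ is generic, so that the symbol $\B(n,2,\A^\sigma)$ in part $(2)$ is meaningful; this is a bookkeeping argument on how $\sigma$ pairs the $\binom{n}{2}$ points of $\L_2(\A_\infty)$, to be verified as it was for $n=6$.

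Next I would prove part $(2)$ by decomposing the triple-point count of the $2n$-line arrangement $\A_\infty\cup\A_\infty^\sigma$ into three contributions: the triple points internal to $\A_\infty$ (its collinearities), the triple points internal to $\A^\sigma$, and the \emph{mixed} triple points, each of which is a double point $P$ of $\A_\infty$ lying on the unique completion line $P\sigma.P$. The strong-action hypothesis fixes the first contribution at its maximum (every collinearity is preserved), and the completion construction fixes the number of mixed points; hence the total is maximized precisely when the internal count of $\A^\sigma$ is maximal, which by Proposition \ref{prop:equiv} is exactly the condition that $X\in\L_{n-3}(\B(n,2,\A^\sigma))$ be a simple non very generic intersection of maximum multiplicity. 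The forward implication is the delicate one: I would argue by contradiction that any $2n$-line arrangement beating this count forces either a further collinearity in $\A_\infty$ (contradicting maximality of the independent intersections) or a denser $\A^\sigma$, replicating the pattern of the proof of Proposition \ref{prop:main} and of Theorem \ref{thm:main}.

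For part $(1)$ I would pass through the counting identity $\binom{s}{2}=\sum_{k\ge 2}t_k\binom{k}{2}$ (see \cite{Dum}) used in the proof of Proposition \ref{prop:main}, which converts ``minimum number of $2$-points'' into ``maximum weighted contribution of higher-multiplicity points.'' The two combinatorial conditions in $(1)$ are exactly what force this weighted maximum: a purely dependent intersection fixed by $\sigma$ is the lattice-theoretic avatar of a concurrency of collinearity lines (the analogue of the coincidence of the three classical Pappus lines), creating one extra high-multiplicity point, while ``$\A_\infty^\sigma$ central'' forces the $n$ completion lines through a common point, replacing many double points by a single multiplicity-$n$ point. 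I would then show, again via the counting identity, that these two features together realize the arithmetic lower bound on $t_2$ allowed for $2n$ lines, and conversely that attaining that bound forces both features.

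The main obstacle is intrinsic to the words \emph{maximum} and \emph{minimum}: they refer respectively to the orchard problem (see \cite{Grum}) and to the dual Sylvester--Gallai / Green--Tao bound (see \cite{Green}), which are themselves open in closed form for general $n$. Consequently, proving that $\A_\infty\cup\A_\infty^\sigma$ attains the \emph{true} extremum, rather than the best value inside the Pappus-completion family, cannot be done without independent control of $t_3(2n)$ and of the minimal number of ordinary points of $2n$ lines. The honest target is therefore the \emph{relative} equivalence asserted by the conjecture---extremality of the $2n$-line arrangement is equivalent to extremality of the $n$-line arrangement $\A^\sigma$---which reduces a size-$2n$ orchard/Sylvester instance to a size-$n$ one and underlies the inductive scheme anticipated in the Introduction. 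The hardest technical ingredient is controlling the non very generic intersection lattice of $\B(n,2,\A^\sigma)$ for general $n$, about which, as noted after \eqref{eq:vgcon}, very little is currently known.
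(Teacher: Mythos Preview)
The statement you are attempting to prove is Conjecture \ref{con}, which the paper presents explicitly as an \emph{open conjecture}: there is no proof in the paper to compare against. The authors introduce it immediately after Theorem \ref{thm:main} as a generalization ``which should provide a way to inductively build'' extremal arrangements, and leave it unproved.

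Your proposal is therefore an attempted resolution of an open problem, and you are commendably candid about the central obstruction: the words ``maximum'' and ``minimum'' in the conjecture refer to the orchard number $t_3(2n)$ and the Green--Tao ordinary-point bound, neither of which is known in closed form for general $n$. Your retreat to a \emph{relative} equivalence (extremality of the $2n$-line arrangement within the completion family $\Leftrightarrow$ extremality of $\A^\sigma$) is a reasonable weakening, but it is not what the conjecture literally asserts, so even a successful execution of your plan would not prove Conjecture \ref{con} as stated.

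Even for the relative version, several steps are not as routine as you suggest. First, the claim that $\A_\infty^\sigma$ consists of exactly $n$ lines and that $\A^\sigma$ is generic is not ``bookkeeping'': the number of lines in the $\sigma$-completion is governed by the orbit structure of $\sigma$ on the $\binom{n}{2}$ points of $\L_2(\A_\infty)$, and for $n=6$ this worked only because the $15$ points split as $3\cdot 3+3\cdot 2$; there is no evident reason this yields $n$ lines in general. Second, your tripartite decomposition of the triple-point count glosses over exactly the phenomenon that carried the proof of Proposition \ref{prop:main}: the three ``extra'' triple points $P'_{45},P'_{46},P'_{56}$ come from incidences \emph{among} completion lines, and establishing them for $n=6$ required an ad hoc parallelogram argument repeated under three different choices of the line at infinity, not a mechanism that visibly generalizes. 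Third, your ``forward implication by contradiction'' in part (2)---ruling out $2n$-line arrangements outside the completion family---is precisely the orchard problem for $2n$ lines and cannot be dispatched by replicating the pattern of Proposition \ref{prop:main}, which simply quoted the known value $t_3(12)=19$ from \cite{Grum}.

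In short: the paper offers no proof, your outline is faithful to the paper's dictionary, but it does not supply the missing ideas needed to pass from $n=6$ to general $n$.
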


\bigskip 

\textbf{Data availability statement:} Not applicable

\end{document}